\newtheorem{thm}{Theorem}[section]
\newtheorem{lem}{Lemma}[section]
\newtheorem{pro}{Proposition}[section]
\newcommand{\dis}{\displaystyle}
\newcommand{\R}{{\Bbb R}}
\newcommand{\N}{{\Bbb N}}
\newcommand{\pa}{\partial}
\newcommand{\hthree}{\hspace{3mm}}
\newcommand{\hfive}{\hspace{5mm}}
\def\cad#1{\csname #1\endcsname }
\title{Blow-up sets for a complex valued semilinear heat equation}
\author{Junichi Harada
\\[1mm] {\small Faculty of Education and Human Studies, Akita University}}
\date{}
\begin{document}
\maketitle

\begin{abstract}
This paper is concerned with finite blow-up solutions of
a one dimensional complex-valued semilinear heat equation.
We provide locations and the number of blow-up points
from the viewpoint of zeros of the solution.
\end{abstract}
\noindent
{\bf Keyword}
system of semilinear parabolic equation; blow-up point

\section{Introduction}
We study blow-up solutions of a one dimensional complex-valued semilinear heat equation:
\begin{equation}\label{eq1A}
 z_t = z_{xx}+z^2,
\end{equation}
where $z(x,t)$ is a complex valued function and $x\in\R$.
If $z(x,t)$ is written by $z=a+ib$,
where $a,b\in\R$,
\eqref{eq1A} is rewritten as
\[
 a_t = a_{xx}+a^2-b^2,
\hfive
 b_t = b_{xx}+2ab.
\]
This equation is a special case of Constantin-Lax-Majda equation with a viscosity term,
which is a one dimensional model for the 3D Navier-Stokes equations
(see \cite{Constantin,Sakajo1,Sakajo2,Schochet,Yang,Guo}).
When $z$ is real-valued (i.e. $b\equiv0$),
\eqref{eq1A} coincides with the so-called Fujita equation \cite{Fujita}:
\begin{equation}\label{eq1B}
 a_t = a_{xx}+a^p.
\end{equation}
In a recent paper \cite{Guo},
they clarify the difference  the dynamics of solutions between \eqref{eq1A} and \eqref{eq1B}.
A goal of paper is to extend their results and
to provide new properties of solutions of \eqref{eq1A} based on results in \cite{Guo}.
The Cauchy problem of \eqref{eq1A} admits an unique local solution in
$L^\infty(\R)\cap C(\R)$.
We call a solution $z$ {\it blow-up} in a finite time,
if there exists $T>0$ such that
\[
 \limsup_{t\to T}\|z(t)\|_{L^{\infty}(\R)}
= \limsup_{t\to T}\sqrt{\|a(t)\|_{L^{\infty}(\R)}^2+\|b(t)\|_{L^{\infty}(\R)}^2}
= \infty.
\]
Moreover
we call a point $x_0\in\R$  a {\it blow-up point},
if there exists a sequence $\{(x_j,t_j)\}_{j\in\N}\subset\R\times(0,T)$ such that
$x_j\to x_0$, $t_j\to T$ and $|z(x_j,t_j)|\to\infty$ as $j\to\infty$.
The set of blow-up points is called a {\it blow-up set}.

We first consider an ODE solution $(a(x,t),b(x,t))=(a(t),b(t))$ of \eqref{eq1A}.
Then equation \eqref{eq1A} is reduced to
\[
 a_t = a^2-b^2,
\hfive
 b_t = 2ab.
\]
This ODE system has an unique solution given by
\[
 a(t) = \frac{T_1-t}{(T_1-t)^2+T_2^2},\hfive b(t) = \frac{T_2}{(T_1-t)^2+T_2^2},
\]
where 
\[
 T_1 = \frac{a(0)}{a(0)^2+b(0)^2},
\hfive
 T_2 = \frac{b(0)}{a(0)^2+b(0)^2}.
\]
Therefore
this ODE solution exists globally in time, if $b(0)\not=0$.
From this observation,
we expect that
the component $b$ prevents a blow-up phenomenon in \eqref{eq1A}.
In fact,
the following result is given in \cite{Guo}.

\begin{thm}[{\rm Theorem 1.1 \cite{Guo}}]\label{11thm}
 Suppose that the initial data $(a_0,b_0)\in L^\infty(\R)\cap C(\R)$ satisfy
\[
 a_0(x)<Ab_0(x)
\hfive\mathrm{for\ all}\ x\in\R
\]
with some constant $A\in\R$.
Then the solution of \eqref{eq1A} exists globally in time and
$\dis\lim_{t\to\infty}(a(t),b(t))=(0,0)$ in $L^\infty(\R)$.
\end{thm}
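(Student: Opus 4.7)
My approach combines an invariant-region argument with a comparison against the pointwise ODE trajectory, in three logical steps: invariance of the half-plane, a uniform $L^\infty$ bound, and convergence to the origin.

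\textbf{Step 1 (invariance of the half-plane).} Set $U = A b - a$, so $U(\cdot, 0) > 0$ by hypothesis. A direct computation from the two scalar equations in the excerpt yields
\[
 U_t - U_{xx} = 2Aab - a^2 + b^2 = -U^2 + (A^2+1)\,b^2.
\]
Either by a direct parabolic maximum principle (any first violation of $U \ge 0$ would force the right-hand side to be $\ge 0$, since $-U^2$ vanishes there while the source $(A^2+1)b^2$ is nonnegative), or by invoking the Chueh--Conley--Smoller invariant-region theorem --- on the line $a = Ab$ the ODE vector field $(a^2-b^2,\,2ab) = b^2(A^2-1,\,2A)$ has inner product $(A^2+1)b^2 \ge 0$ with the inward normal $(-1,A)$, and both components share the same unit diffusion --- one concludes that $a(x,t) \le A b(x,t)$ for all $x \in \R$ throughout the maximal existence interval.

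\textbf{Step 2 (global existence and a uniform bound).} Substituting $a = Ab - U$ in the $b$-equation gives the damped quadratic equation
\[
 b_t = b_{xx} + 2 A b^2 - 2 U b,
\]
coupled to the $U$-equation above. I would compare the PDE solution pointwise with the spatially-frozen ODE trajectory $(\bar a(x,t),\bar b(x,t))$ starting from $(a_0(x), b_0(x))$; the hypothesis $a_0 < A b_0$ makes each of these trajectories globally bounded and convergent to zero via the explicit formula displayed in the excerpt, so they form a natural family of barriers. Transferring this pointwise information to the PDE --- for instance via the identity $(|z|^2)_t = (|z|^2)_{xx} - 2|z_x|^2 + 2a\,|z|^2$ combined with the preserved inequality $a \le Ab$ --- should yield a uniform bound on $\|z(t)\|_{L^\infty}$ and thus global existence.

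\textbf{Step 3 (convergence to zero).} Once $(a,b)$ is uniformly bounded, convergence to $(0,0)$ in $L^\infty(\R)$ follows by a standard $\omega$-limit set argument based on parabolic compactness: any accumulation point of the orbit is an entire-in-time solution still confined to the invariant half-plane $\{a \le Ab\}$, and the only such bounded orbit that is also asymptotically stationary is the zero solution, since every bounded ODE orbit of the system in the half-plane decays to the origin.

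\textbf{Main obstacle.} Step 1 is a routine invariant-region computation. I expect the heart of the argument to be Step 2: the bound $a \le Ab$ by itself is not strong enough to rule out blow-up, because the reduced equation $b_t = b_{xx} + 2Ab^2 - 2Ub$ still contains a Fujita-type quadratic reaction $2Ab^2$, and the damping $-2Ub$ must be exploited quantitatively. Making this cancellation precise --- either by a delicate pointwise comparison with the explicit ODE barriers (nontrivial since the system is not quasi-monotone in the coordinates $(a,b)$) or by finding a Lyapunov functional that sees both $U$ and $b$ simultaneously --- is where I expect the technical core of the proof to lie.
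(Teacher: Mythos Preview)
This statement is not proved in the paper: it is quoted from \cite{Guo} as Theorem~1.1 there, and the present paper only \emph{uses} it (in the proof of Lemma~\ref{3ALem}) without supplying or sketching a proof. So there is no ``paper's own proof'' to compare your proposal against.

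On the merits of your sketch: Step~1 is correct, and the identity $U_t - U_{xx} = -U^2 + (A^2+1)b^2$ for $U = Ab - a$ is precisely the computation that makes the half-plane $\{a \le Ab\}$ invariant. The present paper uses the equivalent formulation via $\gamma = a/b$ (see the proof of Lemma~\ref{3ALem}), deriving $\gamma_t = \gamma_{xx} + 2(b_x/b)\gamma_x - (a^2+b^2)/b$, whose zeroth-order term has a sign when $b>0$; this is your Step~1 in different coordinates.

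You are also right that Step~2 is where the real content lies, and your proposal does not close it. Neither of the two routes you suggest works as stated: the frozen-ODE barrier idea fails because, as you note, the system is not quasi-monotone in $(a,b)$, so pointwise comparison with $(\bar a,\bar b)$ is not available; and the identity $(|z|^2)_t = (|z|^2)_{xx} - 2|z_x|^2 + 2a|z|^2$ combined with $a \le Ab$ still leaves a term bounded only by $2Ab|z|^2$, which has no sign and allows Fujita-type growth. Your Step~3 is then moot without Step~2. To fill the gap you would have to go to \cite{Guo} directly; the argument there exploits the complex structure of $z_t = z_{xx} + z^2$ rather than treating $(a,b)$ as a generic two-component reaction--diffusion system.
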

Furthermore for the case $b_0(x)>0$ with asymptotically positive constants,
they prove that the condition $a_0(x)<Ab_0(x)$ in Theorem \ref{11thm} is not needed
to assure the same conclusion.

\begin{thm}[{\rm Theorem 1.4 \cite{Guo}}]\label{12thm}
 Suppose that the initial data $(a_0,b_0)\in L^\infty(\R)\cap C(\R)$ satisfy
\[
\begin{array}{cc}
 0\leq a_0\leq M, \hfive a_0\not\equiv M, \hfive 0\leq b_0\leq L
 \\[2mm] \dis
 \lim_{|x|\to\infty}a_0(x)=M, \hfive \lim_{|x|\to\infty}b_0(x)=N.
\end{array}
\]
for some $L>0$ and $M>N>0$.
Then the solution of \eqref{eq1A} exists globally in time and
$\dis\lim_{t\to\infty}(a(t),b(t))=(0,0)$ in $L^\infty(\R)$.
\end{thm}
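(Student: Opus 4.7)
The plan is to defer the application of Theorem~\ref{11thm} by a short positive time. At $t=0$ the hypothesis $a_0 < A b_0$ may fail since $b_0$ is only nonnegative and can vanish, but within the local existence interval parabolic smoothing forces $b$ strictly away from zero \emph{uniformly} in $x$, after which Theorem~\ref{11thm} applies and delivers both global existence and the decay $(a,b)\to(0,0)$.

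First I would invoke local well-posedness in $L^\infty(\R)\cap C(\R)$ to obtain $t_0>0$, depending only on $M$ and $L$, on which the solution exists and satisfies a bound $\|a(t)\|_\infty+\|b(t)\|_\infty\le K$ for $t\in[0,t_0]$. Since the zeroth-order coefficient $2a$ in the linear equation $b_t=b_{xx}+2ab$ is then uniformly bounded on $[0,t_0]\times\R$, the nonnegativity of $b_0$ propagates by the maximum principle, and writing $\tilde b=e^{2Kt}b$ one checks that $\tilde b$ is a supersolution of the heat equation, yielding
\[
 b(x,t_0)\ \ge\ e^{-2Kt_0}\bigl(e^{t_0\Delta}b_0\bigr)(x).
\]

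Next I would upgrade this into a \emph{uniform} positive lower bound, exploiting $b_0(x)\to N>0$ as $|x|\to\infty$. Picking $R>0$ with $b_0\ge N/2$ on $\{|x|\ge R\}$, one finds $(e^{t_0\Delta}b_0)(x)\ge \tfrac{N}{2}\int_{|y|\ge R}G(x-y,t_0)\,dy$, where $G$ denotes the one-dimensional heat kernel. The right-hand side is continuous and strictly positive in $x$ with limit $N/2$ at infinity, so it is bounded below by some $c_1>0$ uniformly on $\R$; hence $b(x,t_0)\ge c:=e^{-2Kt_0}c_1>0$ for every $x\in\R$. Combined with $|a(x,t_0)|\le K$, any $A>K/c$ then satisfies $a(\cdot,t_0)<A\,b(\cdot,t_0)$ on $\R$. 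Applying Theorem~\ref{11thm} with initial time $t_0$ and initial data $(a(\cdot,t_0),b(\cdot,t_0))\in L^\infty(\R)\cap C(\R)$ yields the global extension of $(a,b)$ and the convergence $(a(t),b(t))\to(0,0)$ in $L^\infty(\R)$.

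The main obstacle is the uniform positive lower bound on $b(\cdot,t_0)$: the strong maximum principle alone only gives pointwise positivity after time zero, and one must genuinely combine heat-kernel smoothing with the asymptotic positivity $b_0\to N>0$ to prevent $b(\cdot,t_0)$ from approaching zero at any point of $\R$. Once this is secured, the reduction to Theorem~\ref{11thm} is a routine time-shift, and the hypotheses $\|a_0\|_\infty\le M$, $\|b_0\|_\infty\le L$ enter only to fix a uniform length for the local existence interval.
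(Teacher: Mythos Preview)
This theorem is quoted from \cite{Guo} (their Theorem~1.4) and is not proved in the present paper, so there is no in-paper proof to compare against.

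That said, your argument is correct. The only delicate step is the uniform lower bound $b(\cdot,t_0)\ge c>0$, and your treatment is sound: since $b_0\ge0$ and $|a|\le K$ on $[0,t_0]$, the function $e^{2Kt}b$ is a supersolution of the heat equation, whence $b(\cdot,t_0)\ge e^{-2Kt_0}\,e^{t_0\Delta}b_0$; the asymptotic condition $b_0(x)\to N>0$ then ensures that $e^{t_0\Delta}b_0$ is continuous, strictly positive, and has a positive limit at infinity, hence is bounded below on all of $\R$. The time shift and the invocation of Theorem~\ref{11thm} are then routine.

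It is worth remarking that your reduction does not use the hypotheses $a_0\not\equiv M$, $\lim_{|x|\to\infty}a_0(x)=M$, or the ordering $M>N$; only boundedness of $(a_0,b_0)$, the sign condition $b_0\ge0$, and $\liminf_{|x|\to\infty}b_0(x)>0$ enter. So your argument in fact yields a slightly more general statement than the one quoted.
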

Our first result is a local version of Theorem \ref{12thm}.
To state our results,
we assume
\begin{equation}\label{eq1C}
 \sup_{0<t<T}(T-t)(\|a(t)\|_{L^\infty(\R)}+\|b(t)\|_{L^\infty(\R)})
<
 \infty.
\end{equation}

\begin{thm}\label{thm2}
Let $(a,b)$ be a solution of \eqref{eq1A} and $T>0$ be its blow-up time.
Assume that \eqref{eq1C} holds and
there exists a neighborhood ${\cal O}$ of $(x_0,T)$ in $\R\times(0,T)$ such that
$b(x,t)>0$ or $b(x,t)<0$ for $(x,t)\in{\cal O}$.
Then $x_0$ is not a blow-up point of $(a,b)$.
\end{thm}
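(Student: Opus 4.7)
The plan is to carry out a Giga--Kohn-type analysis in self-similar variables centred at $(x_0,T)$ and to convert the local sign condition on $b$ into a statement forcing the rescaled imaginary part to vanish asymptotically. Set
\[
 y = \frac{x - x_0}{\sqrt{T - t}}, \quad s = -\log(T - t), \quad \Phi(y,s) = (T-t)\, a(x,t), \quad \Psi(y,s) = (T-t)\, b(x,t),
\]
so that
\[
 \Phi_s = \Phi_{yy} - \tfrac{y}{2}\Phi_y - \Phi + \Phi^2 - \Psi^2, \qquad \Psi_s = \Psi_{yy} - \tfrac{y}{2}\Psi_y - \Psi + 2\Phi\Psi.
\]
By \eqref{eq1C} the pair $(\Phi,\Psi)$ is uniformly bounded on $\R\times(s_0,\infty)$, while the hypothesis on $\mathcal{O}$ (WLOG $b>0$ there) translates into $\Psi(y,s)>0$ on the expanding region $\{|y|<\delta e^{s/2}\}$ for $s$ large.

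I would introduce the weighted Lyapunov functional
\[
 E(\Phi,\Psi) = \int_{\R}\left[\tfrac{1}{2}(\Phi_y^2 + \Psi_y^2) + \tfrac{1}{2}(\Phi^2 + \Psi^2) - \tfrac{1}{3}\Phi^3 + \Phi\Psi^2\right]\rho\,dy, \quad \rho(y) := e^{-y^2/4},
\]
whose derivative is $\frac{dE}{ds} = -\int_\R(\Phi_s^2 + \Psi_s^2)\rho\,dy$. Boundedness of $E$ produces a sequence $s_n\to\infty$ with $\|(\Phi_s,\Psi_s)(\cdot,s_n)\|_{L^2(\rho\,dy)}\to 0$; parabolic regularity then extracts a further subsequence along which $(\Phi,\Psi)(\cdot,s_n)\to(\Phi^*,\Psi^*)$ in $C^2_{\text{loc}}$ to a bounded stationary solution with $\Psi^*\geq 0$. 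The crucial algebraic identity comes from forming $\Psi\cdot(\Phi\text{-eq})-\Phi\cdot(\Psi\text{-eq})$: setting $W:=\Psi\Phi_y-\Phi\Psi_y$, one obtains
\[
 (W\rho)_y = \bigl[\Psi(\Phi^2+\Psi^2)+\Psi\Phi_s-\Phi\Psi_s\bigr]\rho,
\]
and integrating in $y\in\R$ (boundary terms vanish because $W$ is bounded and $\rho$ is Gaussian) yields
\[
 \int_\R \Psi(\Phi^2+\Psi^2)\rho\,dy = \int_\R(\Phi\Psi_s-\Psi\Phi_s)\rho\,dy.
\]
Along $s_n$ the right-hand side tends to $0$ by Cauchy--Schwarz; splitting the left-hand side at $|y|=\delta e^{s/2}$ (the outer contribution is super-exponentially small) and passing to the limit forces $\int_\R\Psi^*((\Phi^*)^2+(\Psi^*)^2)\rho\,dy=0$, whence $\Psi^*\equiv 0$. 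Thus $\Phi^*$ is a bounded solution of the real stationary equation $\Phi^*_{yy}-(y/2)\Phi^*_y-\Phi^*+(\Phi^*)^2=0$, and by a Giga--Kohn-type classification in one dimension, the only possibilities relevant as blow-up limits are $\Phi^*\equiv 0$ and the constant blow-up profile $\Phi^*\equiv 1$.

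Since these two candidates are isolated and the $\omega$-limit set of a precompact continuous self-similar trajectory is connected, either $(\Phi,\Psi)(\cdot,s)\to(0,0)$ or $(\Phi,\Psi)(\cdot,s)\to(1,0)$ in $C^2_{\text{loc}}$ as $s\to\infty$ (not merely subsequentially). The second alternative I would exclude using the positivity of $\Psi$ once more: writing $\Psi_s=\Psi_{yy}-(y/2)\Psi_y+(2\Phi-1)\Psi$ and testing against $\rho$ gives
\[
 \frac{d}{ds}\int_\R\Psi\rho\,dy = \int_\R(2\Phi-1)\Psi\rho\,dy.
\]
Parabolic Harnack for the positive function $\Psi$ on fixed balls, combined with $(2\Phi-1)\to 1$ locally in $y$, produces a lower bound $\int\Psi\rho\,dy\geq c>0$ for $s$ large and then forces $\int\Psi\rho\,dy$ to grow at least like $e^{(1-\varepsilon)s}$, contradicting the type-I upper bound. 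Hence $(\Phi,\Psi)(\cdot,s)\to(0,0)$, and the standard nondegeneracy of the blow-up rate (adapted to the complex setting under \eqref{eq1C}) implies that $x_0$ is not a blow-up point.

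I expect the main obstacle to be the exclusion of the $(1,0)$ profile: the local positivity $b>0$ on $\mathcal{O}$ must be transferred to an $s$-uniform lower bound on the Gaussian-weighted integral $\int\Psi\rho\,dy$, which requires a Harnack argument whose constants do not deteriorate with $s$, and one must balance this against the exponential growth provided by $(2\Phi-1)\to 1$, which a priori is only $C^0_{\text{loc}}$ and subsequential. A subsidiary technical point is the classification of bounded stationary profiles of the real scalar equation for $\Phi^*$ at $p=2$, which should follow from Pohozaev-type identities against the Gaussian weight.
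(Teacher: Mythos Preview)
Your proposed Lyapunov functional does not have the monotonicity property you claim, and this is the central gap. Writing $F(\Phi,\Psi)=\tfrac13\Phi^3-\Phi\Psi^2$, one has $\partial_\Phi F=\Phi^2-\Psi^2$ but $\partial_\Psi F=-2\Phi\Psi$, whereas the nonlinearity in the $\Psi$-equation is $+2\Phi\Psi$. Thus the self-similar system is \emph{not} a gradient flow of any real functional on $(\Phi,\Psi)$; a direct computation gives
\[
 \frac{d}{ds}E(\Phi,\Psi)=-\int_\R(\Phi_s^2+\Psi_s^2)\rho\,dy+4\int_\R\Phi\Psi\,\Psi_s\,\rho\,dy,
\]
and the extra term has no sign. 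Consequently you cannot extract subsequences along which $\|(\Phi_s,\Psi_s)\|_{L^2_\rho}\to0$, and the identity $\int_\R\Psi(\Phi^2+\Psi^2)\rho\,dy=\int_\R(\Phi\Psi_s-\Psi\Phi_s)\rho\,dy$ (which is correct) no longer forces $\Psi^*\equiv0$. The classification step and the connectedness-of-$\omega$-limit argument then have no input to work with. This lack of variational structure is precisely what distinguishes the complex equation from the real Fujita case and is why the Giga--Kohn machinery does not transfer.

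The paper proceeds along entirely different lines. Assuming $x_0$ is a blow-up point with $b>0$ nearby, it first shows via the quotient $\gamma=a/b$ (which satisfies a linear equation with a good sign) and the global-existence criterion $a_0<Ab_0$ of Guo et al.\ that a full one-sided interval around $x_0$ must lie in the blow-up set. This non-isolation is then fed into rescaling/compactness arguments combined with the Herrero--Vel\'azquez classification for the scalar real equation to show that whenever $v$ is small on a ball, $u$ must be close to $1$; a first-eigenfunction subsolution for the linear equation $v_s=\mathcal{A}v+(2u-1)v$ then gives $\liminf v(0,s)>0$ at nearby points, which in turn bounds $a/b$ and feeds back into the Guo criterion to yield a contradiction. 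Your step~6 (excluding the profile $(1,0)$ by growth of $\int\Psi\rho$) is close in spirit to that last subsolution step, but the paper needs the preceding non-isolation lemmas to set it up, and nowhere uses an energy.
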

Theorem \ref{thm2} implies that
if a solution $(a,b)$ blows up in a finite time,
the component $b$ must be sign changing near blow-up points.
A main goal of this paper is to characterize the location and the number of blow-up points
by using zeros of the component $b$.

\begin{thm}\label{thm3}
Let $(a,b)$ and $T>0$ be as in {\rm Theorem \ref{thm2}}
and $\gamma(t)$ be a zero of $b(t)$ $(${\rm i.e.} $b(\gamma(t),t)=0)$.
Assume that \eqref{eq1C} holds and $b_0(x)$ has exact one zero.
Then
$\gamma(t)$ is continuous on $[0,T]$
and its blow-up point $x_0\in\R$ is given by $x_0=\gamma(T)$.
\end{thm}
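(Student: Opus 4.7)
The plan is to combine a zero-number argument for the linear parabolic equation satisfied by $b$ with Theorem~\ref{thm2}, identifying the unique zero of $b(\cdot,t)$ and then tracking it up to the blow-up time. The function $b$ solves
\[
 b_t = b_{xx} + 2a(x,t)\,b,
\]
and by \eqref{eq1C} the coefficient $2a$ is bounded on $\R\times[0,t_0]$ for every $t_0<T$. The Sturm--Angenent zero-number theorem then gives that the number of zeros of $b(\cdot,t)$ in $\R$ is nonincreasing in $t$ and drops strictly whenever a multiple zero forms. Since $b_0$ has exactly one zero, $b(\cdot,t)$ has at most one zero for every $t\in[0,T)$.

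Next I would show this zero count cannot drop to $0$. If $b(\cdot,t_0)$ had constant sign (say $b>0$) at some $t_0<T$, the parabolic maximum principle applied to the linear equation for $b$ on $\R\times[t_0,t_1]$, for each $t_1<T$, would propagate the strict positivity up to all of $\R\times[t_0,T)$. Every $x\in\R$ would then have the neighborhood $\R\times(t_0,T)$ on which $b>0$, so Theorem~\ref{thm2} would imply no point is a blow-up point, contradicting the existence of a blow-up time. Hence $b(\cdot,t)$ has exactly one zero $\gamma(t)$ on $[0,T)$; this zero must be simple (otherwise the count would drop), and applying the implicit function theorem to $b(\gamma(t),t)=0$ yields $\gamma\in C^1([0,T))$.

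The final and most delicate step is the behavior as $t\to T$. Let $x_0$ be a blow-up point; Theorem~\ref{thm2} forces $b$ to change sign in every spacetime neighborhood of $(x_0,T)$, so that some sequence $t_n\to T$ satisfies $\gamma(t_n)\to x_0$. Upgrading this subsequential statement to $\gamma(t)\to x_0$ is the main obstacle: the cluster set of $\gamma(t)$ as $t\to T$ is connected by continuity of $\gamma$, and if it were not a singleton it would contain a whole interval, with every value in that interval attained by $\gamma(t_n)$ along some $t_n\to T$ by the intermediate value theorem. To rule this out I would pass to the self-similar variables $w(y,s)=(T-t)\,z(x_0+y\sqrt{T-t},t)$ with $s=-\log(T-t)$; by \eqref{eq1C} these are uniformly bounded, so parabolic compactness yields subsequential convergence to a bounded entire profile of the rescaled equation, as in \cite{Guo}. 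A profile analysis shows that the zero of the imaginary part is isolated, which pulled back gives $\gamma(t)=x_0+O(\sqrt{T-t})$. This both proves $\gamma(t)\to x_0$ and forces $x_0$ to be the unique blow-up point; extending by $\gamma(T):=x_0$ produces the continuous curve on $[0,T]$ asserted by the theorem.
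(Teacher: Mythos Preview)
Your first two steps---the zero-number argument for $b$ and the use of Theorem~\ref{thm2} to rule out the zero disappearing---are correct and match the paper's setup at the start of Section~4.

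The genuine gap is in your final paragraph. The sentence ``A profile analysis shows that the zero of the imaginary part is isolated'' is precisely the hard part of the theorem, and you have not indicated how to carry it out. Subsequential compactness in self-similar variables only gives you a bounded solution $(U,V)$ of the rescaled system \eqref{eq2B} on $\R\times\R$; there is no classification of such entire solutions for this \emph{system} analogous to the Giga--Kohn result for the scalar equation, and nothing in \cite{Guo} supplies one. In particular, you cannot conclude from compactness alone that $V$ has an isolated zero, nor that $\gamma(t)=x_0+O(\sqrt{T-t})$. Different subsequences could a priori yield different limits, and the cluster set of $\gamma$ could still be an interval.

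The paper handles this by a completely different route: it argues by contradiction that the blow-up set cannot contain an interval. Assuming $(-L,L)$ lies in the blow-up set, it first shows (Lemma~\ref{5DLem}) that $\liminf_{s\to\infty}\|v(s)\|_{L^\infty(-1,1)}=0$, and then spends all of Section~\ref{5Sub} upgrading this to $\lim_{s\to\infty}\|v(s)\|_{L^\infty(-1,1)}=0$ (Proposition~\ref{5BPro}). That upgrade is the crux: it requires a Filippas--Kohn type spectral decomposition of $v$ in $L^2_\rho$, a careful ODE analysis of the coefficients (Lemmas~\ref{5FLem}--\ref{5GLem}), and repeated use of the one-zero constraint to exclude the neutral mode $\phi_2$. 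Only after obtaining exponential decay of $v$ in $L^2_\rho$ does the paper invoke Herrero--Vel\'azquez type pointwise estimates (Lemma~\ref{5HLem}) and the isolated-blow-up dichotomy (Lemma~\ref{5ALem}, Lemma~\ref{3BCLem}) to reach a contradiction. Your proposal skips all of this machinery, and without it the argument does not close.
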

The existence of blow-up solutions are proved in \cite{Guo} and \cite{Nouaili}. 
In \cite{Guo},
they provide sufficient conditions on the initial data for a finite time blow-up
by using a comparison argument in the Fourier space based on \cite{Palais}.
In particular,
the exact initial data satisfying their blow-up conditions
is given by (see Remark 3.3 \cite{Guo})
\[
 a_0(x)=(3-4x^2)e^{-x^2},
\hfive
 b_0(x)=2xe^{-x^2}.
\]
For this case,
Theorem \ref{thm3} suggests that the solution blows up only on the origin.
On the other hand,
they \cite{Nouaili} construct blow-up solutions with exact blow-up profiles
$(a^*(x),b^*(x))=\lim_{t\to T}(a(x,t),b(x,t))$.
Two blow-up solutions constructed in \cite{Guo} and \cite{Nouaili}
have different type of asymptotic forms.

\section{Preliminary}
\subsection{Functional setting}\label{Sub21}
To study the asymptotic behavior of blow-up solutions,
we introduce a self-similar variable around $\xi\in\R$
and a new unknown function $(u_\xi,v_\xi)$,
which is defied by
\begin{equation}\label{eq2A}
 u_\xi(y,s)=(T-t)a(\xi+e^{-s/2}y,t),
\hfive
 v_\xi(y,s)=(T-t)b(\xi+e^{-s/2}y,t),
\hfive
 t=T-e^{-s}.
\end{equation}
Let $s_T=-\log(T-t)$.
Then $(u,v)=(u_\xi,v_\xi)$ satisfies
\begin{equation}\label{eq2B}
 \begin{cases}
 \dis
 u_s = u_{yy}-\frac{y}{2}u_y-u+u^2-v^2, & y\in\R,\ s>s_T,
 \\[3mm] \dis
 v_s = v_{yy}-\frac{y}{2}v_y-v+2uv, & y\in\R,\ s>s_T.
 \end{cases}
\end{equation}
We here introduce functional spaces which are related to \eqref{eq2B}.
Put $\rho(y)=e^{-y^2/4}$ and
\[
 L_\rho^2(\R) = \left\{ f\in L_{\text{loc}}^2(\R); \|f\|_\rho<\infty\right\},
\hfive
 H_\rho^1(\R) =
 \left\{
 f\in L_\rho^2(\R);\|f\|_{H_\rho^1(\R)}=\sqrt{\|f\|_\rho^2+\|f_x\|_\rho^2}<\infty
 \right\},
\]
where the norm of $L_\rho^2(\R)$ is defined by
\[
 \|f\|_\rho^2 = \int_{-\infty}^\infty f(y)^2\rho(y)dy.
\]
Here we recall the following inequality (see Lemma 2.1 \cite{Naito} p. 430).
\begin{equation}\label{5FFGeq}
 \int_{-\infty}^\infty y^2v^2\rho dy < c\|f\|_{H_\rho^1(\R)}^2.
\end{equation}
For the convenience of the reader,
we provide the proof of this inequality.
Let $g(y)=f(y)e^{-y^2/8}$.
Then a direct computation shows that
\[
 g_y^2 = \left( f_y^2+\frac{y^2}{16}f^2-\frac{y}{4}(f^2)_y \right)e^{-y^2/4}
\]
The integration of the last term is calculated as
\[
 -\int_{-\infty}^\infty
 \frac{y}{4}(f^2)_ye^{-y^2/4}dy
=
 \int_{-\infty}^\infty
 \left( \frac{y}{4}e^{-y^2/4} \right)_yf^2dy
=
 \int_{-\infty}^\infty
 \left( \frac{1}{4}-\frac{y^2}{8} \right)f^2e^{-y^2/4}dy.
\]
Therefore
we obtain
\[
 \int_{-\infty}^\infty f_y^2\rho dy+\frac{1}{4}\int_{-\infty}^\infty f^2\rho dy
 -\frac{1}{16}\int_{-\infty}^\infty y^2f^2\rho dy >0,
\]
which proves the desired inequality.

\subsection{Boundedness of solutions in self-similar variables}
We here provide some conditions for a boundedness of solutions.
These conditions are useful to apply a scaling argument,
which is often used in the proof of Theorem \ref{thm2} and Theorem \ref{thm3}.

\begin{lem}\label{2ALem}
Let $(a,b)$ be a solution of \eqref{eq1A} satisfying \eqref{eq1C}
and $(u_\xi,v_\xi)$ be given in \eqref{eq2A}.
Then there exist $R>0$ and $\epsilon_0>0$ such that
if $\|u_\xi(s_1)\|_{L^\infty(-R,R)}+\|v_\xi(s_1)\|_{L^\infty(-R,R)}<\epsilon_0$
for some $\xi\in\R$ and $s_1>s_T$,
then $\xi$ is not a blow-up point of $(a,b)$.
\end{lem}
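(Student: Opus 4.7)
My plan is to argue by rescaling, translating the assumed self-similar smallness into a small-data initial value problem for a rescaled copy of \eqref{eq1A} on a fixed parabolic cylinder, and then establishing uniform boundedness on that cylinder via a Duhamel / continuity bootstrap that crucially uses the type~I hypothesis \eqref{eq1C}.

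First I would introduce the rescaling. Setting $t_1 := T - e^{-s_1}$ and
\[
(\tilde a, \tilde b)(\tilde x, \tilde t) := (T - t_1)\,(a, b)\bigl(\xi + \sqrt{T - t_1}\,\tilde x,\; t_1 + (T - t_1)\tilde t\bigr),\qquad (\tilde x, \tilde t) \in \R \times [0, 1),
\]
one checks from \eqref{eq2A} that $(\tilde a, \tilde b)$ satisfies \eqref{eq1A} with initial data $(\tilde a, \tilde b)(\cdot, 0) = (u_\xi, v_\xi)(\cdot, s_1)$. The hypothesis then reads $\|(\tilde a, \tilde b)(\cdot, 0)\|_{L^\infty(-R, R)} < \epsilon_0$, while \eqref{eq1C} yields the global type~I bound $\|(\tilde a, \tilde b)(\cdot, \tilde t)\|_{L^\infty(\R)} \leq M/(1 - \tilde t)$ for some $M > 0$ depending only on the constant in \eqref{eq1C}.

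Next I would prove, via Duhamel and a continuity/bootstrap argument, that for $R$ large enough and $\epsilon_0$ small enough (both depending only on $M$), $(\tilde a, \tilde b)$ stays uniformly bounded on $(-R/2, R/2) \times [0, 1)$. Writing
\[
\tilde a(\tilde x, \tilde t) = (G_{\tilde t} \ast \tilde a_0)(\tilde x) + \int_0^{\tilde t} \bigl(G_{\tilde t - \sigma} \ast (\tilde a^2 - \tilde b^2)(\cdot, \sigma)\bigr)(\tilde x)\, d\sigma
\]
with $G_{\tilde t}$ the standard heat kernel, the spatial integrals are split into a near part ($|\tilde z| < R$), where the initial smallness and a bootstrap hypothesis control the nonlinearity, and a far part ($|\tilde z| > R$), where the Gaussian tail of $G_{\tilde t - \sigma}$ beats the type~I factor $M/(1-\sigma)$ uniformly for $\tilde t \in [0, 1)$ once $R$ is large. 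Applying the standard continuity method to $T^* := \sup\{\tilde t \in [0, 1):\ \sup_{[0, \tilde t]}\|(\tilde a, \tilde b)\|_{L^\infty(-R/2, R/2)} \leq 2\epsilon_0\}$ then forces $T^* = 1$.

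Finally, rescaling back gives $\|(a, b)(\cdot, t)\|_{L^\infty(|x - \xi| < (R/2)\sqrt{T - t_1})} \leq 2\epsilon_0/(T - t_1)$ for every $t \in [t_1, T)$, and combining with the continuity (hence local boundedness) of $(a, b)$ on $\R \times (0, t_1]$ one obtains a uniform bound of $(a, b)$ on the fixed neighborhood $|x - \xi| < (R/2)\sqrt{T - t_1}$ for all $t \in (0, T)$, so that $\xi$ is not a blow-up point. The hard part is the Duhamel bootstrap of the previous paragraph: the growing type~I factor $M^2/(1-\sigma)^2$ entering the nonlinear term must be absorbed by the Gaussian tail of the heat kernel uniformly as $\tilde t \to 1^-$, and separating the scales correctly (possibly via a nested family of cut-offs rather than the single one sketched here) is the technical heart of the proof.
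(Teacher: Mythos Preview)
Your rescaling and the overall strategy are natural, but the bootstrap as sketched does not close, and the suggested fix by nested cutoffs does not obviously repair it. The difficulty is the annulus $R/2<|\tilde z|<R$ in the nonlinear Duhamel term: for $|\tilde x|<R/2$ you split the convolution at $|\tilde z|=R$, so on this annulus you are in the ``near'' part, yet the bootstrap hypothesis only controls $(\tilde a,\tilde b)$ on $(-R/2,R/2)$. On $R/2<|\tilde z|<R$ the only available bound is the type~I estimate $M/(1-\sigma)$, and since $|\tilde x-\tilde z|$ can be arbitrarily small there, the heat kernel provides no decay; the resulting contribution is of order $\int_0^{\tilde t}(1-\sigma)^{-2}\,d\sigma$, which diverges as $\tilde t\to 1^-$. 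Shifting the splitting point to $|\tilde z|=R/2$ just moves the problem to the boundary of the far region. A finite chain of nested cutoffs does not help either: each step needs a positive spatial gap to gain a Gaussian tail, but the sum of the gaps is at most $R/2$, while the time interval $[0,1)$ is effectively infinite in self-similar time, so you cannot iterate to $\tilde t=1$ with uniformly small losses.

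The paper avoids this boundary issue by staying in self-similar variables and using the weighted energy $F(s)=\|u\|_\rho^2+\|v\|_\rho^2$ with $\rho(y)=e^{-y^2/4}$. The key algebraic input is the inequality $\int y^2 f^2\rho\,dy\le c\|f\|_{H^1_\rho}^2$ (derived in Section~\ref{Sub21}): in the energy identity $\tfrac12 F_s\le -G-F+c\int(|u|^3+|v|^3)\rho$, the cubic term is split into a near part $|y|<R_0$ (bounded by $\epsilon_0 F$ via the assumed $L^\infty$ smallness) and a far part $|y|>R_0$ (bounded by $cMR_0^{-2}(F+G)$ via the inequality above). The far contribution is thus absorbed into the dissipative gradient term $G$ rather than estimated pointwise, and there is no boundary layer to worry about. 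Combined with an a~priori short-time growth bound for $F$ and a parabolic $L^2_\rho\!\to\!L^\infty_{\mathrm{loc}}$ regularity estimate, a continuity argument then propagates the smallness for all $s>s_1$. If you want to keep your Duhamel viewpoint, the natural repair is to run it in self-similar variables against the Ornstein--Uhlenbeck semigroup $e^{(\mathcal A-1)s}$, whose kernel both contracts and self-localizes; this is essentially equivalent to the paper's weighted-energy argument.
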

\begin{proof}
For simplicity of notations,
we omit the subscript $\xi\in\R$.
Let $M=\sup_{s>0}(\|u(s)\|_{L^\infty(\R)}+\|u(s)\|_{L^\infty(\R)})<\infty$
and set
$F(s)=\|u(s)\|_\rho^2+\|v(s)\|_\rho^2$, $G(s)=\|u_y(s)\|_\rho^2+\|v_y(s)\|_\rho^2$.
Multiplying \eqref{eq2B} by $u$ and $v$,
we get
\[
 \frac{1}{2}F_s
<
 -G-F+c\int_{-\infty}^\infty\left( |u|^3+|u|^3 \right)\rho dy.
\]
We assume $\|u(s)\|_{L^\infty(-R,R)}+\|v(s)\|_{L^\infty(-R,R)}<\epsilon$.
Then from \eqref{eq2B},
the last term is estimated by
\[
 \int_{-\infty}^\infty\left( |u|^3+|v|^3 \right)\rho dy
<
 \epsilon\int_{|y|<R}\left( u^2+v^2 \right)\rho dy
+
 MR^{-2}\int_{|y|>R}y^2\left( u^2+v^2 \right)\rho dy
<
 \epsilon F
+
 cMR^{-2}G.
\]
We now choose $R_0>0$ and $\epsilon_0>0$ such that
$\epsilon_0<1/2$ and $cMR_0^{-2}<1/2$,
which implies $F_s(s)<0$
if $\|u(s)\|_{L^\infty(-R_0,R_0)}+\|v(s)\|_{L^\infty(-R_0,R_0)}<\epsilon_0$.
To construct a comparison function for $v$,
we first consider
\[
 w_s = w_{yy}-\frac{y}{2}w_y+(-1+2M)w \hthree \tau>s,
\hfive
 w(s) = |v(s)|.
\]
We easily see that
\[
 \|w(\tau)\|_\rho^2<e^{(-2+4M)(\tau-s)}\|v(s)\|_\rho^2.
\]
Next we construct a comparison function for $u$.
\[
 z_s = z_{yy}-\frac{y}{2}z_y+(-1+M)z+w(\tau)^2 \hthree \tau>s,
\hfive z(s)=|u(s)|,
\]
where $w(\tau)$ is defined above.
Then we get
\[
\begin{array}{lll}
 \|z(\tau)\|_\rho^2
&\hspace{-2mm}<&\hspace{-2mm} \dis
 e^{(-1+2M)(\tau-s)}\|u(s)\|_\rho^2+M^2\int_s^\tau e^{(-1+2M)(\tau-\mu)}\|w(\mu)\|_\rho^2d\mu
\\[4mm]
&\hspace{-2mm}<&\hspace{-2mm} \dis
 e^{(-1+2M)(\tau-s)}\|u(s)\|_\rho^2+
 \left( \frac{M^2}{-2+4M} \right)
 e^{(-3+6M)(\tau-s)}\|v(s)\|_\rho^2.
\end{array}
\]
Combining above estimates,
we obtain
\begin{equation}\label{eq2C}
 F(\tau) < c_1e^{c_2(\tau-s)}F(s)
\hfive\text{for } \tau>s
\end{equation}
for some $c_1,c_2>0$.
Furthermore
by a regularity theory for parabolic equations,
it holds that
\begin{equation}\label{eq2D}
 \|u(s)\|_{L^\infty(-R_0,R_0)}+\|v(s)\|_{L^\infty(-R_0,R_0)}
<
 c_3\int_{s-1}^sF(\mu) d\mu.
\end{equation}
Let $\epsilon_1=\min\{c_1e^{c_2}/2,c_3/2\}\epsilon_0$ and $\epsilon_2=\epsilon_1/2$.
We now claim that if $F(s)<\epsilon_2$ for some $s>s_T$,
then it holds that $F(\tau)<\epsilon_1$ for $\tau>s$.
In fact,
we assume that there exists
$\tau_1>s$ such that $F(\tau)<\epsilon_1$ for $s<\tau<\tau_1$ and $F(\tau_1)=\epsilon_1$.
By definition of $\epsilon_1$ and \eqref{eq2C},
we find that $\tau_1>s+1$.
Therefore
we get from definition of $\tau_1$ and \eqref{eq2D} that
\[
 \|u(\tau)\|_{L^\infty(-R_0,R_0)}+\|v(\tau)\|_{L^\infty(-R_0,R_0)}
<
 c_3\epsilon_1
<
 \frac{\epsilon_0}{2}
\]
for $\tau\in(s+1,\tau_1)$.
As a consequence,
from definition of $R_0$ and $\epsilon_0$,
it follows that $F_s(s)<0$ for $s\in(s+1,\tau_1)$.
However
this contradicts definition of $\tau_1$,
which completes the proof.
\end{proof}

\begin{lem}\label{2BLem}
Let $(a,b)$ and $(u_\xi,v_\xi)$ be as in Lemma {\rm\ref{2ALem}}
and
$\{\xi_i\}_{i\in\N}\subset\R$, $\{s_i\}_{i\in\infty}$ $(s_i\to\infty)$ be sequences
and put
\[
 u_i(y,s) = u_{\xi_i}(y,s_i+s),
\hfive
 v_i(y,s) = u_{\xi_i}(y,s_i+s).
\]
Then
if $(u_i,v_i)\to(U,V)$ in $L_{\mathrm{loc}}^\infty(\R\times\R)$ as $i\to\infty$
and $(U(s),V(s))\to(0,0)$ in $L_{\mathrm{loc}}^\infty(\R)$,
then $\xi_i\in\R$ is not a blow-up point of $(a,b)$ for large $i\in\N$.
\end{lem}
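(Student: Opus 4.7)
The plan is to reduce the statement to Lemma \ref{2ALem}, which guarantees constants $R>0$ and $\epsilon_0>0$ such that smallness of $(u_\xi, v_\xi)$ in $L^\infty(-R,R)$ at a \emph{single} time $s_1 > s_T$ already suffices to exclude $\xi$ from the blow-up set. Consequently, it is enough to produce, for each sufficiently large $i$, one time $s_1^{(i)}>s_T$ at which $\|u_{\xi_i}(s_1^{(i)})\|_{L^\infty(-R,R)} + \|v_{\xi_i}(s_1^{(i)})\|_{L^\infty(-R,R)} < \epsilon_0$.

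First I would fix the constants $R$ and $\epsilon_0$ supplied by Lemma \ref{2ALem}. Reading the hypothesis $(U(s),V(s))\to(0,0)$ in $L_{\mathrm{loc}}^\infty(\R)$ as convergence in the limit $s\to+\infty$ (the natural reading, since this is the only direction in which the shifted time $s_i+s_0$ can be forced past $s_T$), choose $s_0\in\R$ such that
\[
 \|U(s_0)\|_{L^\infty(-R,R)}+\|V(s_0)\|_{L^\infty(-R,R)} < \epsilon_0/2.
\]
The $L_{\mathrm{loc}}^\infty(\R\times\R)$ convergence $(u_i,v_i)\to(U,V)$, restricted to the compact set $[-R,R]\times\{s_0\}$, then delivers some $i_0$ such that for all $i\geq i_0$,
\[
 \|u_i(s_0)\|_{L^\infty(-R,R)}+\|v_i(s_0)\|_{L^\infty(-R,R)} < \epsilon_0.
\]
Unwinding the identity $u_i(y,s_0)=u_{\xi_i}(y,s_i+s_0)$ and its analogue for $v_i$, this is exactly smallness of $(u_{\xi_i},v_{\xi_i})$ on $(-R,R)$ at the time $s_i+s_0$.

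To finish, since $s_i\to\infty$, enlarging $i_0$ if necessary ensures $s_i+s_0>s_T$ for all $i\geq i_0$, so the hypothesis of Lemma \ref{2ALem} is met with $\xi=\xi_i$ and $s_1=s_i+s_0$, and we conclude that $\xi_i$ is not a blow-up point for every large $i$. There is no serious obstacle in this argument: it is a routine compactness/approximation step, and the only delicate point is keeping track of the role of the time shift $s_i$ (which is what allows the asymptotic smallness of the limit $(U,V)$ to be converted into a pointwise-in-time smallness statement for the original self-similar profiles at an admissible time).
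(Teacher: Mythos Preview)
Your argument is correct and is essentially identical to the paper's own proof: both fix the constants $R,\epsilon_0$ from Lemma~\ref{2ALem}, choose a time $s_*$ (your $s_0$) at which the limit $(U,V)$ is small on a bounded interval, pass the smallness to $(u_i,v_i)$ via the $L^\infty_{\mathrm{loc}}$ convergence, and then invoke Lemma~\ref{2ALem} at the shifted time $s_i+s_*$. The only cosmetic differences are that the paper takes the smallness of $(U,V)$ on the slightly larger interval $(-2R,2R)$ (which is harmless), while you explicitly verify $s_i+s_0>s_T$---a detail the paper leaves implicit.
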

\begin{proof}
Let $R>0$ and $\epsilon_0>0$ be given in Lemma \ref{2ALem}.
Since $(U(s),V(s))\to(0,0)$,
there exists $s_*>0$ such that
$\|U(s_*)\|_{L^\infty(-2R,2R)}+\|V(s_*)\|_{L^\infty(-2R,2R)}<\epsilon_0/2$.
Furthermore
since $(u_i,v_i)\to(U,V)$ as $i\to\infty$,
we see that
\[
\begin{array}{l}
\dis
 \|u_{\xi_i}(s_i+s_*)\|_{L^\infty(-R,R)}+\|v_{\xi_i}(s_i+s_*)\|_{L^\infty(-R,R)}
\\[2mm] \dis \hfive\hfive
=
 \|u_i(s_*)\|_{L^\infty(-R,R)}+\|v_i(s_*)\|_{L^\infty(-R,R)}<\epsilon_0
\end{array}
\]
for large $i\in\N$.
Therefore from Lemma \ref{2ALem},
$\xi_i$ is not a blow-up point of $(a,b)$,
which completes the proof.
\end{proof}

\begin{lem}\label{2CLem}
Let $(a_i,b_i)$ be a solution of \eqref{eq1A} and satisfies
$\sup_{x\in\R}(|a_i(x,t)|+|b_i(x,t)|)<c/(1-t)$ for $t\in(0,1)$.
If $(a_i,b_i)\to(A,B)$ and $(A,B)$ does not blow up on $x=x_0$ at $t=1$,
then $x_0$ is not a blow-up point of $(a_i,b_i)$ at $t=1$
for large $i\in\N$.
\end{lem}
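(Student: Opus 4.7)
The plan is to transfer to self-similar variables around $(x_0,1)$ and apply a uniform version of Lemma \ref{2ALem} to each $(a_i,b_i)$. Following \eqref{eq2A} with $T=1$ and $\xi=x_0$, set
\[
 u_i(y,s) = e^{-s}a_i(x_0 + e^{-s/2}y,\, 1 - e^{-s}),
\qquad
 v_i(y,s) = e^{-s}b_i(x_0 + e^{-s/2}y,\, 1 - e^{-s}),
\]
and analogously $(U,V)$ from $(A,B)$. The hypothesis $|a_i|+|b_i|<c/(1-t)$ translates to $\|u_i(s)\|_{L^\infty(\R)}+\|v_i(s)\|_{L^\infty(\R)}<c$ uniformly in $i$ and $s>0$, and the local convergence $(a_i,b_i)\to(A,B)$ on compact subsets of $\R\times[0,1)$ transfers to $(u_i,v_i)\to(U,V)$ in $L^\infty_{\text{loc}}(\R\times\R)$.

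Next I extract the smallness of $(U,V)$ as $s\to\infty$. Since $(A,B)$ does not blow up at $x_0$ at $t=1$, there exist $\delta,M_0>0$ with $|A|+|B|\le M_0$ on $|x-x_0|<\delta$, $1-\delta<t<1$. Substituting into the definition of $(U,V)$ yields $|U(y,s)|+|V(y,s)|\le M_0e^{-s}$ whenever $|y|<\delta e^{s/2}$ and $s>\log(1/\delta)$, and hence for every $R>0$,
\[
 \|U(s)\|_{L^\infty(-R,R)}+\|V(s)\|_{L^\infty(-R,R)} \longrightarrow 0
\qquad(s\to\infty).
\]

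Inspecting the proof of Lemma \ref{2ALem}, the constants $R_0,\epsilon_0$ depend on $(a,b)$ only through $M=\sup_s(\|u\|_{L^\infty}+\|v\|_{L^\infty})$ and universal numerical coefficients; since in our setting $M\le c$ uniformly in $i$, a single pair $(R_0,\epsilon_0)$ serves the whole sequence. Choose $s_*$ so large that
\[
 \|U(s_*)\|_{L^\infty(-R_0,R_0)}+\|V(s_*)\|_{L^\infty(-R_0,R_0)}<\epsilon_0/2,
\]
and then invoke $(u_i,v_i)\to(U,V)$ in $L^\infty_{\text{loc}}$ to obtain
\[
 \|u_i(s_*)\|_{L^\infty(-R_0,R_0)}+\|v_i(s_*)\|_{L^\infty(-R_0,R_0)}<\epsilon_0
\]
for every sufficiently large $i$. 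Lemma \ref{2ALem} applied to $(a_i,b_i)$ with $\xi=x_0$ and $s_1=s_*$ then yields that $x_0$ is not a blow-up point of $(a_i,b_i)$ at $t=1$.

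The only delicate step is confirming that $R_0,\epsilon_0$ of Lemma \ref{2ALem} may be chosen uniformly across the sequence; this is exactly why the hypothesis of the lemma imposes the common bound $|a_i|+|b_i|<c/(1-t)$, so that the quantity $M$ governing Lemma \ref{2ALem}'s constants is independent of $i$. Everything else is a routine passage to the limit.
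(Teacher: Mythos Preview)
Your argument is correct and follows essentially the same route as the paper: pass to self-similar variables around $(x_0,1)$, use the non-blow-up of $(A,B)$ at $x_0$ to get $(U,V)\to(0,0)$, transfer this smallness to $(u_i,v_i)$ at a fixed time $s_*$ via the convergence, and invoke Lemma~\ref{2ALem}. The paper is terser---it simply says ``by the same way as in the proof of Lemma~\ref{2BLem}''---whereas you make explicit the point that the constants $R_0,\epsilon_0$ from Lemma~\ref{2ALem} can be taken uniform in $i$ because they depend only on the common bound $M\le c$; this is a genuine detail the paper leaves implicit, and your treatment of it is appropriate.
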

\begin{proof}
Set $1-t=e^{-s}$,
$u_i(y,s)=(1-t)a_i(x_0+e^{-s/2}y,t)$ and $v_i(y,s)=(1-t)b_i(x_0+e^{-s/2}y,t)$.
From the assumption,
we see that $(u_i,v_i)$ is uniformly bounded on $\R\times(0,\infty)$.
Since $(a_i,b_i)\to(A,B)$ and $u_i(y,0)=a_i(x_0+y,0)$, $v_i(y,0)=b_i(x_0+y,0)$,
we see that
$(u_i,v_i)\to(U,V)$ and $U(y,s)=(1-t)A(x_0+e^{-s/2}y,t)$, $V(y,s)=(1-t)B(x_0+e^{-s/2}y,t)$
for $s>0$.
Since $\sup_{x\in\R}(|A(x,t)|+|B(x,t)|)<c/(1-t)$,
$(A,B)$ does not blow up for $t\in(0,1)$.
If $(A,B)$ does not blow up on $x=x_0$ at $t=1$,
it holds that $(U,V)\to(0,0)$ as $s\to\infty$.
Therefore by the same way as in the proof of Lemma \ref{2BLem},
we conclude that
$x_0$ is not a blow-up point of $(a_i,b_i)$ at $t=1$ for large $i\in\N$.
The proof is completed.
\end{proof}

\section{Local conditions for boundedness of solutions}
In this section,
we provide the proof of Theorem \ref{thm2}.
Let $x_0\in\R$ be a blow-up point, $T>0$ be a blow-up time and
${\cal O}$ be the neighborhood of $(x_0,T)$ stated in Theorem \ref{thm2}.
Since
the proof for the case $b(x,t)<0$ for $(x,t)\in{\cal O}$ is the same as
for the case $b(x,t)>0$ for $(x,t)\in{\cal O}$,
we here only consider the later case.
For such a case,
by shifting the initial time,
we can assume
\[
 b(x,t)>0 \hfive \text{for } x\in(L_1,L_2),\ t\in(0,T)
\]
for some $L_1<x_0<L_2$.
Furthermore throughout this section, we assume \eqref{eq1C}.

\begin{lem}\label{3ALem}
Either one of the intervals $(L_1,x_0)$ and $(x_0,L_2)$ is included in the blow-up set.
\end{lem}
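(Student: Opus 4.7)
I plan to argue by contradiction. Assume that neither $(L_1, x_0)$ nor $(x_0, L_2)$ is contained in the blow-up set $B$, and fix $x_1 \in (L_1, x_0) \setminus B$ and $x_2 \in (x_0, L_2) \setminus B$. The aim is to derive $x_0 \notin B$, contradicting the hypothesis. Work in the self-similar variables centered at $x_0$: by \eqref{eq1C}, the rescaled pair $(u, v) := (u_{x_0}, v_{x_0})$ is uniformly bounded on $\R \times (s_T, \infty)$, so by parabolic regularity it has, along a subsequence $s_j \to \infty$, a limit $(u(\cdot, s_j), v(\cdot, s_j)) \to (U_*, V_*)$ in $L^\infty_{\mathrm{loc}}(\R)$, where $(U_*, V_*)$ is a bounded stationary solution of \eqref{eq2B}. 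Because $b > 0$ on $(L_1, L_2) \times (0, T)$, we have $v(y, s) > 0$ for $y$ in the interval $(e^{s/2}(L_1 - x_0), e^{s/2}(L_2 - x_0))$, which expands to all of $\R$; hence $V_* \geq 0$ everywhere.

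A classification of bounded stationary solutions of \eqref{eq2B} with $V \geq 0$ should reduce the possibilities to $(U_*, V_*) \in \{(0, 0), (1, 0)\}$: the tools are the Hermite spectral structure of $\partial_y^2 - (y/2) \partial_y$ on $L_\rho^2(\R)$ from Subsection \ref{Sub21}, together with the strong maximum principle applied to the linear equation $V_{*yy} - (y/2) V_{*y} + (2 U_* - 1) V_* = 0$. The trivial limit $(0, 0)$ is immediately ruled out: by Lemma \ref{2ALem} applied at a large $s_j$, one would have $x_0 \notin B$. For the nontrivial possibility $(U_*, V_*) = (1, 0)$ the hypotheses $x_1, x_2 \notin B$ enter: since $(a, b)$ is bounded on neighborhoods of $x_1$ and $x_2$ up to $t = T$, a zoom-in of $(a, b)$ about $x_0$ at blow-up rate (as in Lemma \ref{2CLem}, with $1 - t = e^{-s}$) yields a sequence of solutions with uniform bounds converging to a limit $(A, B)$. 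The non-blow-up at $x_1, x_2$ combined with the positivity of $b$ and Theorems \ref{11thm}, \ref{12thm} should then force $(A, B)$ not to blow up at $x_0$; Lemma \ref{2CLem} would give $x_0 \notin B$, again a contradiction.

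The principal obstacle is twofold. First, the classification of bounded stationary solutions of \eqref{eq2B} under the sign constraint $V \geq 0$ is the technical heart and requires coupling the spectral analysis of $V$ with the nonlinear equation for $U$ (ruling out, in particular, nontrivial bounded $(U_*, V_*)$ with $V_* > 0$). Second, bridging the $L^\infty_{\mathrm{loc}}$ self-similar convergence at the center $x_0$ to the global boundedness data at $x_1$ and $x_2$ is delicate: rescalings at different centers are related by $(u_\xi, v_\xi)(y, s) = (u, v)(e^{s/2}(\xi - x_0) + y, s)$, so information about $(u, v)$ at $|y| \sim e^{s/2}$ is needed, which presumably requires an additional uniform-in-$y$ estimate. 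The most natural source of such an estimate is the positivity $b > 0$ via a comparison argument on the linear equation $b_t = b_{xx} + 2 a b$ restricted to $(L_1, L_2) \times (0, T)$, bringing both uses of the hypothesis together.
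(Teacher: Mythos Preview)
Your plan diverges substantially from the paper's argument, and it has a genuine gap at its core.

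The paper's proof is short and does not pass through stationary solutions at all. It introduces the quotient $\gamma=a/b$, which on $(l_1,l_2)\times(0,T)$ satisfies
\[
\gamma_t=\gamma_{xx}+2\frac{b_x}{b}\gamma_x-\frac{a^2+b^2}{b},
\]
with the last term strictly negative since $b>0$. Because $l_1,l_2$ are not blow-up points, $a$ is bounded there, and a simple comparison for the linear $b$-equation gives a lower bound $b\geq\delta>0$ at $x=l_1,l_2$ and at $t=0$. Hence $\gamma\leq M/\delta$ on the parabolic boundary of $(l_1,l_2)\times(0,T)$, and the maximum principle propagates this bound to the whole rectangle. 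After rescaling about $x_0$ the limit $(A,B)$ therefore satisfies $A\leq (M/\delta)B$ on all of $\R$, which is exactly the hypothesis of Theorem~\ref{11thm}, forcing $(A,B)$ to be global; Lemma~\ref{2CLem} then says $x_0$ is not a blow-up point.

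Your route has two real problems. First, the assertion that a subsequential limit $(U_*,V_*)$ of $(u(\cdot,s_j),v(\cdot,s_j))$ is a \emph{stationary} solution of \eqref{eq2B} is unjustified: for the real Fujita equation this comes from the Giga--Kohn energy monotonicity, but the complex system \eqref{eq2B} has no known Lyapunov functional, so at best you get an entire bounded solution, not a steady state. The classification you propose is then for the wrong class of objects, and classifying bounded entire solutions of \eqref{eq2B} with $V\geq0$ is considerably harder (indeed the paper never attempts it). Second, even in your handling of the case ``$(U_*,V_*)=(1,0)$'', you never explain how the non-blow-up at $x_1,x_2$ produces the pointwise inequality $A\leq cB$ needed to invoke Theorem~\ref{11thm}; Theorem~\ref{12thm} requires very specific asymptotic shapes that there is no reason for the rescaled limit to possess. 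The missing ingredient in both places is precisely the $\gamma=a/b$ maximum-principle bound, which the paper uses to bypass all of the self-similar classification and to feed Theorem~\ref{11thm} directly.
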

\begin{proof}
Assume that their exist $l_1\in(L_1,x_0)$ and $l_2\in(x_0,L_2)$ such that
$x=l_1$ and $x=l_2$ are not blow-up points.
From this assumption,
$a(x,t)$ and $b(x,t)$ are uniformly bounded on $(l_1-\epsilon,l_1+\epsilon)$ and
$(l_2-\epsilon,l_2+\epsilon)$ for some $\epsilon>0$.
Therefore
since $b(x,t)>0$ in $(L_1,L_2)$,
by a comparison argument,
we easily see that
\begin{equation}\label{eq3B}
 b(l_1,t)>\delta \hthree \text{for } t\in(0,T),
\hfive
 b(l_2,t)>\delta \hthree \text{for } t\in(0,T),
\hfive
 b_0(x)>\delta \hthree \text{for } x\in(l_1,l_2)
\end{equation}
with some $\delta>0$.
Set
$\gamma=a/b$.
Then $\gamma$ satisfies
\[
 \gamma_t = \gamma_{xx}+2\nu\gamma_x-\left( \frac{a^2+b^2}{b} \right),
\]
where $\nu=b_x/b$.
Since $x=l_1$ and $x=l_2$ are not blow-up points,
it is clear that $M=\sup_{0<t<T}(|a(l_1,t)|+|a(l_2,t)|+|a_0(x)|)<\infty$.
Combining this fact and \eqref{eq3B},
we get
\[
 \gamma(l_1,t) < M/\delta  \hthree \text{for } t\in(0,T),
\hfive
 \gamma(l_2,t) < M/\delta  \hthree \text{for } t\in(0,T),
\hfive
 \gamma_0(x) < M/\delta  \hthree \text{for } x\in(l_1,l_2).
\]
Therefore
we obtain from a maximum principle that
\begin{equation}\label{eq3C}
 \gamma(x,t) > M/\delta
\hfive \text{for } x\in(l_1,l_2),\ t\in(0,T).
\end{equation}
Let $\lambda_i=1/i$ and set
$a_i(x,\tau)=\lambda_ia(x_0+\sqrt{\lambda_i}x,T-1/i+\lambda_i\tau)$ and
$b_i(x,\tau)=\lambda_ib(x_0+\sqrt{\lambda_i}x,T-1/i+\lambda_i\tau)$.
Then
we easily see that \eqref{eq1C} is equivalent to
\[
 \sup_{x\in\R}(|a_i(x,\tau)|+|b_i(x,\tau)|) < \frac{c_0}{1-\tau}.
\]
Therefore
by taking a subsequence,
we get $(a_i,b_i)\to(A,B)$ and
\[
 \sup_{x\in\R}(|A(x,\tau)|+|B(x,\tau)|) < \frac{c_0}{1-\tau}.
\hfive \text{for } \tau\in(-1,1).
\]
Furthermore
we get from \eqref{eq3C} that
\[
 A(x,\tau)/B(x,\tau)<M/\delta
\hfive \text{for } x\in\R,\ \tau\in(-1,1).
\]
Since $(A,B)$ is a solution of \eqref{eq1A},
Theorem 1.1 \cite{Guo} stated in Introduction implies
that $(A,B)$ exists globally in time.
Therefore
from Lemma \ref{2CLem},
the origin is not a blow-up point of $(a_i,b_i)$ for large $i\in\N$,
which implies that $x_0$ is not a blow-up point of $(a,b)$.
This contradicts the assumption,
which completes he proof. 
\end{proof}

From Lemma \ref{3ALem},
we can assume that
the interval $(-L,L)$ is included in the blow-up set and $b$ satisfies
\begin{equation}\label{eq3F}
 b(x,t)>0
\hfive \text{for } x\in(-L,L),\ t\in(0,T).
\end{equation}
We now introduce self-similar variables and define a new unknown function $(u,v)$
as in Section \ref{Sub21}.
Let $\xi\in\R$ and set
\[
\begin{array}{c}
 T-t=e^{-s},
\hfive
 s_T=-\log(T-t),
\\[2mm]
 u_\xi(y,s)=(T-t)a(\xi+e^{-s/2}y,t),
\hfive
 v_\xi(y,s)=(T-t)b(\xi+e^{-s/2}y,t).
\end{array}
\]
Then $(u,v)=(u_\xi,v_\xi)$ satisfies \eqref{eq2B}.

\begin{lem}\label{3BCLem}
Let $\{\xi_i\}_{i\in\N}\subset(-L/2,L/2)$ and $\{s_i\}_{i\in\N}$ $(s_i\to\infty)$
be sequences.
Put
\[
\begin{array}{c}
 a_i(x,\tau) = \lambda_ia(\xi_i+\sqrt{\lambda_i}x,t_i+\lambda_i\tau),
\hfive
 b_i(x,\tau) = \lambda_ib(\xi_i+\sqrt{\lambda_i}x,t_i+\lambda_i\tau).
\end{array}
\]
If $(a_i,b_i)\to(A,B)$ as $i\to\infty$ and $(A,B)$ blows up on the origin at $\tau=1$,
then the origin is not an isolated blow-up point of $(A,B)$.
\end{lem}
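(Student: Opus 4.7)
The plan is to argue by contradiction. Assume the origin is an isolated blow-up point of $(A,B)$ at $\tau=1$, so there exists $r>0$ with $(A,B)$ failing to blow up at every $x\in(-r,r)\setminus\{0\}$ at $\tau=1$. Fix one such $x_0\neq 0$ and apply Lemma \ref{2CLem} to the sequence $(a_i,b_i)$ at the point $x_0$. The strategy is then to compare the conclusion of Lemma \ref{2CLem} with the fact, recalled via Lemma \ref{3ALem} just before \eqref{eq3F}, that every point of $(-L,L)$ is a blow-up point of $(a,b)$.

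To apply Lemma \ref{2CLem} I first check its hypothesis. Taking the natural convention $\lambda_i=T-t_i$ (so that $\tau=1$ corresponds to $t=T$, consistent with $\lambda_i=e^{-s_i}$), a direct computation from \eqref{eq1C} yields
\[
|a_i(x,\tau)|+|b_i(x,\tau)|=\lambda_i\bigl(|a|+|b|\bigr)<\frac{\lambda_i\,c_0}{T-t_i-\lambda_i\tau}=\frac{c_0}{1-\tau}
\]
for $\tau\in(0,1)$ and all large $i$. Hence Lemma \ref{2CLem} applies and gives that $x_0$ is not a blow-up point of $(a_i,b_i)$ at $\tau=1$ for all sufficiently large $i$. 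Undoing the rescaling, this means $\eta_i:=\xi_i+\sqrt{\lambda_i}\,x_0$ is not a blow-up point of $(a,b)$ at $t=T$ for large $i$.

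Since $\xi_i\in(-L/2,L/2)$ and $\sqrt{\lambda_i}\to0$, however, $\eta_i\in(-L,L)$ for all large $i$, and by the reduction made just before \eqref{eq3F} every point of $(-L,L)$ is a blow-up point of $(a,b)$; hence each $\eta_i$ \emph{is} a blow-up point, the desired contradiction. The main obstacle is essentially bookkeeping---one has to verify that the scaling conventions align so that \eqref{eq1C} transfers cleanly into the $c/(1-\tau)$ form required by Lemma \ref{2CLem}; the remainder is a direct combination of Lemma \ref{2CLem} with the inclusion of $(-L,L)$ in the blow-up set.
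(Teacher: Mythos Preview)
Your proof is correct and follows the same core strategy as the paper: assume the origin is isolated, use Lemma~\ref{2CLem} to transfer non-blow-up from $(A,B)$ to $(a_i,b_i)$ at a nearby point, and contradict the standing assumption that $(-L,L)$ lies in the blow-up set of $(a,b)$. Your version is in fact slightly more direct than the paper's, which first extracts a uniform bound on an interval $(\theta_1,\theta_2)$, passes to self-similar variables around $\theta=(\theta_1+\theta_2)/2$, and invokes Lemma~\ref{2ALem}; you bypass this detour by applying Lemma~\ref{2CLem} at a single point $x_0\neq 0$ and undoing the rescaling immediately.
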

\begin{proof}
We prove by contradiction.
Assume that the origin is an isolated blow-up point of $(A,B)$.
Then
there exist $\theta_1,\theta_2$ ($0<\theta_1<\theta_2<1$) such that
\[
 \sup_{0<\tau<1}\sup_{\theta_1<x<\theta_2}(|A(x,\tau)|+|B(x,\tau)|) < \infty.
\]
Therefore from Lemma \ref{2CLem},
there exists $c>0$ such that
\begin{equation}\label{eq3I}
 \sup_{0<\tau<1}\sup_{\theta_1<x<\theta_2}(|a_i(x,\tau)|+|b_i(x,\tau)|) < c
\hfive \text{for } i\gg1.
\end{equation}
Let $\theta=(\theta_1+\theta_2)/2$ and
\[
 \tilde{u}_i(y,s)=(1-\tau)a_i(\theta+e^{-s/2}y,\tau),
\hfive
 \tilde{v}_i(y,s)=(1-\tau)b_i(\theta+e^{-s/2}y,\tau),
\hfive
 1-\tau=e^{-s}.
\]
Then
we see that
\begin{align*}
 \tilde{u}_i(y,s)
&=
 e^{-(s+s_i)}a(\tilde{\xi}_i+e^{-(s+s_i)/2}y,T-e^{-(s+s_i)}) =
 u_{\tilde{\xi}_i}(y,s_i+s),
\\
 \tilde{v}_i(y,s)
&=
 v_{\tilde{\xi}_i}(y,s_i+s),
\end{align*}
where $\tilde{\xi}_i=\xi_i+\sqrt{\lambda_i}\theta$.
Put $\Delta=(\theta_2-\theta_1)/2$.
Then we get from \eqref{eq3I} that
\begin{align*}
 \sup_{|y|<e^{s/2}\Delta}(|u_{\tilde{\xi}_i}(y,s_i+s)|+|v_{\tilde{\xi}_i}(y,s_i+s)|)
&=
 \sup_{|y|<e^{s/2}\Delta}(|\tilde{u}_i(y,s)|+|\tilde{v}_i(y,s)|)
\\
&=
 \sup_{\theta_1<x<\theta_2}e^{-s}(|a_i(x,\tau)|+|b_i(x,\tau)|)
\\
&<
 ce^{-s}
\hfive \text{for } s>0,\ i\gg1.
\end{align*}
This implies
\[
 \sup_{|y|<e^{s/2}\Delta}(|u_{\tilde{\xi}_i}(y,s)|+|v_{\tilde{\xi}_i}(y,s)|)
<
 ce^{-(s-s_i)}
\hfive\text{for } s>s_i,\ i\gg1.
\]
Therefore from Lemma \ref{2ALem},
$\tilde{\xi}_i$ is not a blow-up point of $(a,b)$,
which contradicts that $\tilde{\xi}_i$ is a blow-up point of $(a,b)$.
\end{proof}

\begin{lem}\label{3CLem}
For any $R>0$,
there exists $\epsilon_1>0$ such that
if $\dis\inf_{-R<y<R}v_\xi(y,s)<\epsilon_1$,
then it holds that
\[
 \sup_{-R<y<R}|u_\xi(y,s)-1|<1/8
\hfive\mathrm{for}\ s>s_T,\ \xi\in(-L/2,L/2).
\]
\end{lem}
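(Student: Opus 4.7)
I argue by contradiction, using the scaling/compactness template of Lemmas \ref{2ALem}--\ref{2CLem}. If the conclusion fails for some $R>0$, there exist sequences $\xi_i\in(-L/2,L/2)$, $s_i>s_T$ and points $y_i,\tilde y_i\in[-R,R]$ with $v_{\xi_i}(y_i,s_i)\to 0$ and $|u_{\xi_i}(\tilde y_i,s_i)-1|\ge 1/8$. By \eqref{eq1C} the family $(u_{\xi_i},v_{\xi_i})$ is uniformly bounded, so parabolic regularity allows one to extract, from the translates $u_i(y,s):=u_{\xi_i}(y,s_i+s)$ and $v_i(y,s):=v_{\xi_i}(y,s_i+s)$, a $C^{2,1}_{\mathrm{loc}}$--limit $(U,V)$ solving \eqref{eq2B}, together with $\xi_i\to\xi_\infty\in[-L/2,L/2]$, $y_i\to y_\infty$, $\tilde y_i\to\tilde y_\infty$. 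In particular
\[
 V(y_\infty,0)=0,\qquad |U(\tilde y_\infty,0)-1|\ge 1/8.
\]
I focus on the case $s_i\to\infty$, where $(U,V)$ is defined on all of $\R\times\R$; the bounded-$s_i$ case is handled on a half-cylinder by identical reasoning.

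\noindent\textbf{Reduction to the scalar equation.} Since $b>0$ on $(-L,L)\times(0,T)$ and $\xi_i+e^{-(s_i+s)/2}y\to\xi_\infty\in(-L/2,L/2)$ uniformly on compact $(y,s)$-sets, one has $v_i>0$ eventually on compact sets, hence $V\ge 0$ on $\R\times\R$. The $V$-equation can be rewritten as the linear parabolic equation $V_s-V_{yy}+\tfrac{y}{2}V_y+(1-2U)V=0$ with bounded coefficients; applying the strong parabolic maximum principle at the interior zero $V(y_\infty,0)=0$ yields $V\equiv 0$ on $\R\times(-\infty,0]$, and forward uniqueness for the linear Cauchy problem then extends this to $V\equiv 0$ on $\R\times\R$. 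Consequently $U$ is a bounded entire solution of the scalar self-similar Fujita equation $U_s=U_{yy}-\tfrac{y}{2}U_y-U+U^2$.

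\noindent\textbf{Liouville step and contradictions.} The weighted Lyapunov functional $E(s)=\int_\R(\tfrac12 U_y^2+\tfrac12 U^2-\tfrac13 U^3)\rho\,dy$ satisfies $E'(s)=-\|U_s\|_\rho^2\le 0$ and is bounded on $\R$ because $U\in L^\infty(\R\times\R)$; the classification of bounded entire solutions of the scalar self-similar Fujita equation (Giga--Kohn type, valid at the subcritical exponent $p=2$) then forces $U\equiv 0$ or $U\equiv 1$. If $U\equiv 1$, the second display above gives $0\ge 1/8$, absurd. If $U\equiv 0$, then $(u_i,v_i)\to(0,0)$ locally uniformly, so Lemma \ref{2BLem} shows that $\xi_i$ is not a blow-up point of $(a,b)$ for large $i$; but $\xi_i\in(-L/2,L/2)\subset(-L,L)$ lies in the blow-up set by Lemma \ref{3ALem}, a contradiction. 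The delicate point is the Liouville-type classification: ruling out non-constant bounded entire orbits (heteroclinic connections between $0$ and $1$, or sign-changing stationary profiles on $\R$) of the self-similar scalar flow. Everything else is a standard application of parabolic regularity, the strong maximum principle, and the scaling lemmas of Section~2.
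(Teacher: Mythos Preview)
Your argument follows the same contradiction--compactness template as the paper: extract a limit of the rescaled solutions, use positivity of $b$ on $(-L,L)$ together with the strong maximum principle to force the $v$-component of the limit to vanish identically, and then classify the remaining scalar limit. The difference lies entirely in that last classification step. The paper passes to the original variables $(A,B)$, observes that $A$ solves $A_\tau=A_{xx}+A^2$ with the type~I bound and blows up at the origin at $\tau=1$ (by Lemma~\ref{2CLem}), and then invokes the Herrero--Vel\'azquez dichotomy: either $A$ is the flat ODE solution (so $a_i(\cdot,0)=u_{\xi_i}(\cdot,s_i)\to1$, contradicting $\sup|u_{\xi_i}-1|\ge1/8$), or the origin is an \emph{isolated} blow-up point of $A$, which is ruled out by Lemma~\ref{3BCLem}. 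You instead stay in the self-similar picture and appeal to the Giga--Kohn Liouville theorem for bounded entire solutions of $U_s=U_{yy}-\tfrac{y}{2}U_y-U+U^2$, concluding directly that $U\equiv0$ or $U\equiv1$, and thereby bypassing Lemma~\ref{3BCLem} altogether.

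Your route is cleaner in that it avoids the auxiliary Lemma~\ref{3BCLem}, but it leans on a slightly stronger external input. Two small points deserve attention. First, the Giga--Kohn Liouville theorem is stated for the odd nonlinearity $|w|^{p-1}w$; for the even nonlinearity $w^2$ you should remark that the ancient limit $U$ is automatically nonnegative (in the original variables, $A_\tau-A_{xx}=A^2\ge0$ and $|A|\le c/(1-\tau)\to0$ as $\tau\to-\infty$, so the negative part of $A$ is a bounded subsolution of the heat equation with vanishing data at $-\infty$, hence $A\ge0$). With $U\ge0$ the two nonlinearities coincide and the classical Liouville result applies. Second, your dismissal of the bounded-$s_i$ case (``identical reasoning on a half-cylinder'') does not actually go through via Liouville, since the limit is then not ancient; rather, that case is handled trivially because $b>0$ on $(-L,L)\times(0,T)$ prevents $v_{\xi_*}(y_*,s_*)=0$ at any finite~$s_*$. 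With these two clarifications your proof is correct.
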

\begin{proof}
We prove by contradiction. 
Assume that
there exist $R>0$, $\{s_i\}_{i\in\N}$ ($s_i\to\infty$) and $\{\xi_i\}_{i\in\N}\subset(-L/2,L/2)$
such that
\begin{equation}\label{eq3J}
 \inf_{-R<y<R}v_{\xi_i}(y,s_i)<1/i,
\hfive\hfive
 \sup_{-R<y<R}|u_{\xi_i}(y,s_i)-1|>1/8.
\end{equation}
Put $\lambda_i = e^{-s_i}$, $t_i=T-\lambda_i$ and
\[
 a_i(x,\tau) = \lambda_ia(\xi_i+\sqrt{\lambda_i}x,t_i+\lambda_i\tau),
\hfive
 b_i(x,\tau) = \lambda_ib(\xi_i+\sqrt{\lambda_i}x,t_i+\lambda_i\tau).
\]
Then
we easily see from \eqref{eq1C} that
\begin{equation}\label{eq3II}
 |a_i(x,\tau)|+|b_i(x,\tau)|<c(1-\tau)^{-1}
\end{equation}
for some $c>0$.
Therefore
by taking a subsequence,
we get $(a_i,b_i)\to(A,B)$.
Since $b_i(x,0)=v_{\xi_i}(x,s_i)$,
by a strong maximum principle,
$B$ must be  zero on $\R\times(0,1)$.
If $A\equiv0$,
Lemma \ref{2CLem} implies that $(a_i,b_i)$ does not blow up on the origin.
Therefore
it is sufficient to consider the case $A\not\equiv0$ on $\R\times(0,1)$.
We note from \eqref{eq3II} that $A$ exists at least until $\tau=1$.
Since the origin is a blow-up point of $(a_i,b_i)$ at $\tau=1$,
$A$ must blow up at the origin at $\tau=1$ from Lemma \ref{2CLem}.
Since $B\equiv0$,
$A$ satisfies $A_\tau=A_{xx}+A^2$.
From Theorem \cite{Herrero} p.209,
there are two possibilities:
(I) $A\equiv1$ or (II) the origin is the isolated blow-up point.
Since (II) is excluded from Lemma \ref{3BCLem},
(I) occurs.
Therefore
this contradicts \eqref{eq3J},
which completes the proof,
\end{proof}

\begin{lem}\label{3DLem}
Let $v_\pm=v_{\xi}$ with $\xi=\pm L/4$.
Then it holds that
\[
 \liminf_{s\to\infty}v_{\pm}(0,s)>0.
\]
\end{lem}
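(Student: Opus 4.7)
The plan is to argue by contradiction, combining the parabolic zoom-in of Section 2 with the structural results of Lemmas \ref{3ALem}--\ref{3CLem}. By symmetry, it suffices to treat $v_+$. Suppose $\liminf_{s\to\infty}v_+(0,s)=0$ and fix a sequence $s_i\to\infty$ with $v_+(0,s_i)\to 0$. Setting $\lambda_i=e^{-s_i}$ and $t_i=T-\lambda_i$, I perform the parabolic zoom-in
\[ a_i(x,\tau)=\lambda_i\,a(L/4+\sqrt{\lambda_i}\,x,t_i+\lambda_i\tau), \qquad b_i(x,\tau)=\lambda_i\,b(L/4+\sqrt{\lambda_i}\,x,t_i+\lambda_i\tau). \]
The bound \eqref{eq1C} gives $|a_i|+|b_i|\leq c/(1-\tau)$, so by compactness a subsequence converges in $L^\infty_{\mathrm{loc}}(\R\times(-\infty,1))$ to a solution $(A,B)$ of \eqref{eq1A}.

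To identify $(A,B)$, observe that $b_i\geq 0$ eventually (since $L/4$ is interior to $(-L,L)$ where \eqref{eq3F} holds), so $B\geq 0$; combined with $B(0,0)=\lim v_+(0,s_i)=0$ and the strong maximum principle for the linear equation $B_\tau=B_{xx}+2AB$, this forces $B\equiv 0$ on $\R\times(-\infty,1)$. Hence $A$ solves the real Fujita equation $A_\tau=A_{xx}+A^2$, and by Lemma \ref{2CLem} it blows up at $(0,1)$ (using that $L/4$ is a blow-up point by Lemma \ref{3ALem}). Lemma \ref{3BCLem} rules out isolated blow-up for $A$ at the origin, and the Herrero--Vel\'azquez classification (as invoked in the proof of Lemma \ref{3CLem}) then forces $A(x,\tau)\equiv(1-\tau)^{-1}$ on $\R\times(-\infty,1)$.

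The contradiction will come from the linearization of \eqref{eq2B} at the equilibrium $(u,v)=(1,0)$: here the linearized operator acting on $v$ is $L_1v=v_{yy}-(y/2)v_y+v$, whose top eigenvalue in $L^2_\rho(\R)$ is $1$ with eigenfunction $1$. Consequently the projection $c_0(s)=\int_\R v_+(y,s)\rho(y)\,dy$ of $v_+$ onto the constant mode should grow like $e^s$ whenever $u_+$ stays close to $1$ on the bulk of the support of $\rho$. Since the identification $A\equiv(1-\tau)^{-1}$ combined with the time-shift compactness of Lemma \ref{2BLem} upgrades the zoom-in convergence to $(u_+(y,s),v_+(y,s))\to(1,0)$ in $L^\infty_{\mathrm{loc}}(\R\times\R)$ as $s\to\infty$, I plan to derive a differential inequality of the form
\[ c_0'(s)\geq (1-o_R(1))\,c_0(s)-\delta_R, \]
where the $o_R(1)$ reflects the convergence $u_+\to 1$ on $|y|\leq R$ and $\delta_R\to 0$ comes from the Gaussian tail of $\rho$. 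Combined with the upper bound $c_0(s)\leq c\int_\R\rho\,dy$ from \eqref{eq1C}, this exponential instability must contradict the positivity of $c_0$ coming from $v_+\geq 0$ on an expanding interval in $y$ (together with $b_0(L/4)>0$).

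The principal obstacle is this last step. First, one must upgrade the pointwise-in-$s$ identification of the scaling limit to a uniform convergence $(u_+,v_+)(y,s)\to(1,0)$ in $L^\infty_{\mathrm{loc}}(\R\times\R)$ as $s\to\infty$; this requires combining Lemma \ref{2BLem} with a Liouville-type classification of bounded entire solutions of the linearized self-similar equation, analogous to what is implicit in the proof of Lemma \ref{3CLem}. Second, the threshold $2\delta_R/(1-o_R(1))$ below which the inequality on $c_0$ fails to force exponential growth must be matched against a positive lower bound for $c_0(s)$ propagated from the sign condition $b>0$ on $(-L,L)$; controlling this balance is the technical heart of the proof.
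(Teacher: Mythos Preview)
Your approach is viable in spirit but considerably more elaborate than needed, and the obstacles you flag in the last paragraph are real. The paper's proof is a short subsolution argument that sidesteps them entirely.

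The $v$-equation reads $v_s=\mathcal{A}v+(2u-1)v$ with $\mathcal{A}=\partial_y^2-\frac{y}{2}\partial_y$. Fix $R_0$ so large that the principal Dirichlet eigenvalue $\lambda_1$ of $-\mathcal{A}$ on $(-R_0,R_0)$ satisfies $\lambda_1<1/8$, and let $\phi>0$ be the associated eigenfunction, with $\phi(\pm R_0)=0$. By \eqref{eq3F} one has $v_\pm>0$ on $[-R_0,R_0]$ for all large $s$. Lemma~\ref{3CLem} with $R=R_0$ supplies $\epsilon_1>0$ such that $\inf_{|y|<R_0}v_\pm(\cdot,s)<\epsilon_1$ forces $|u_\pm(\cdot,s)-1|<1/8$ on $(-R_0,R_0)$, hence $2u_\pm-1>3/4>\lambda_1$ there. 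Choose $\epsilon>0$ so small that $\epsilon\|\phi\|_\infty<\epsilon_1$ and $\epsilon\phi\leq v_\pm(\cdot,s_0)$ at some large initial time $s_0$. Then $\psi=\epsilon\phi$ is a stationary subsolution on $(-R_0,R_0)$ at any moment where $v_\pm$ threatens to touch it (since at such a contact $\inf v_\pm\leq\epsilon\|\phi\|_\infty<\epsilon_1$), and the comparison principle yields $v_\pm\geq\epsilon\phi$ for all $s\geq s_0$. In particular $v_\pm(0,s)\geq\epsilon\phi(0)>0$.

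Two remarks on your route. First, your scaling argument through the identification $A\equiv(1-\tau)^{-1}$ is precisely the proof of Lemma~\ref{3CLem}; that lemma already packages the compactness plus Herrero--Vel\'azquez classification and hands you the implication ``$v$ small $\Rightarrow$ $u$ close to $1$'' for free, so steps 1--5 of your plan are redundant. Second, the projection onto the constant mode is a weaker device than the pointwise subsolution here: your inequality $c_0'\geq(1-o(1))c_0-\delta_R$ requires $u_+\approx1$ on the bulk of the weight $\rho$, which via Lemma~\ref{3CLem} is only guaranteed while $\inf v_+$ stays below $\epsilon_1$---so you are pushed into the dichotomy ``either $v_+$ stays small somewhere (and $c_0$ grows exponentially past its a~priori bound) or $v_+$ becomes uniformly large on $(-R,R)$ (and you win directly)''. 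That dichotomy is exactly what the comparison with $\epsilon\phi$ encodes in one stroke, and the Dirichlet condition $\phi(\pm R_0)=0$ eliminates your tail error $\delta_R$ altogether.
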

\begin{proof}
Let ${\cal A}=\pa_y^2-\frac{y}{2}\pa_y$.
Since the first eigenvalue of ${\cal A}$ in $H_\rho^1(\R)$ is zero,
we can choose $R_0>0$ such that
the first eigenvalue of ${\cal A}|_{\text{Dirichlet}}$
in $H_{\rho}^1(-R_0,R_0)=\{f\in H_\rho^1(\R);f(y)=0$ for $|y|>R_0\}$
is less than $1/8$.
Put $v_\pm=v_{\xi}$ with $\xi=\pm L/4$.
From \eqref{eq3F},
we see that $v_\pm$ is positive on $(-R_0,R_0)$ for large $s>s_T$.
Let $\phi(y)>0$ be the first eigenfunction of ${\cal A}|_{\text{Dirichlet}}$.
Then from Lemma \ref{3CLem},
if we choose $\epsilon>0$ sufficiently small,
$\psi=\epsilon\phi$ becomes a subsolution of $v_\pm$ in $(-R_0,R_0)$,
which completes the proof.
\end{proof}

\begin{proof}[Proof of Theorem {\rm\ref{thm2}}]
Combining Lemma \ref{3DLem} and \eqref{eq1C},
we obtain $a(\pm L/4,t)/b(\pm L/4,t)<c'$ for some $c'>0$.
Therefore
by the same argument as in the proof of Lemma \ref{3ALem},
we see that the origin is not a blow-up point,
which contradicts the assumption.
The proof of Theorem \ref{thm2} is completed.
\end{proof}

\section{Location of blow-up points}
This section is devoted to the proof of Theorem \ref{thm3}.
From Theorem \ref{thm2},
if a solution of \eqref{eq1A} blows up in a finite time,
$b$ must be sign changing near the blow-up point.
Here
we discuss a relation between blow-up points and zeros of $b$.
Since $b$ satisfies $b_t=b_{xx}+2ab$,
the number of zeros of $b(t)$ is nonincreasing in $t$ (see e.g. \cite{Matano}). 
Therefore from assumption of Theorem \ref{thm3},
the number of zeros of $b(t)$ is one or zero for $t\in(0,T)$.
However
since $(a,b)$ blows up at $t=T$, 
$b(t)$ has one zero for $t\in(0,T)$ from Theorem \ref{thm2}.
Throughout this section,
we assume that $b(t)$ has one zero for $t\in(0,T)$
and denote a zero of $b(t)$ by $\gamma(t)$.
Furthermore
we assume
\[
 b(x,t) =
 \begin{cases}
 \text{negative} & \text{if } x<\gamma(t)
 \\
 \text{positive} & \text{if } x>\gamma(t)
 \end{cases}
\]

\begin{pro}\label{5APro}
Let $x_0\in\R$ be an isolated blow-up point.
Then the blow-up set on $\R$ consists of $x_0$.
\end{pro}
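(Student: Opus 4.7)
The plan is to argue by contradiction: assume the blow-up set contains another point $x_2\neq x_0$. WLOG $x_2>x_0$, and by isolation choose $\delta>0$ with $(x_0-\delta,x_0+\delta)\cap\mathrm{BlowUp}=\{x_0\}$ and $x_2\geq x_0+\delta$.

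First I would show that both $x_0$ and $x_2$ are cluster points of $\gamma(t)$ as $t\to T$. By Theorem~\ref{thm2}, $b$ is sign-changing in every neighborhood of a blow-up point, and since $b(\cdot,t)$ has the unique zero $\gamma(t)$, that zero must accumulate at each blow-up point. The curve $\gamma$ is continuous on $(0,T)$ (the sign-change of $b$ at $\gamma(t)$ is transversal, so the implicit function theorem applies; alternatively, a direct compactness argument using the single-zero assumption works) and bounded near $T$, so the cluster set $\Gamma:=\bigcap_{\tau<T}\overline{\gamma((\tau,T))}$ is a closed connected interval containing $[x_0,x_2]$. Then for any $x^*\in(x_0,x_0+\delta)\cap(x_0,x_2)$, $x^*$ is not a blow-up point, so parabolic regularity extends $b$ continuously up to $t=T$ in a neighborhood of $x^*$. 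Because $\gamma$ has cluster values on both sides of $x^*$, the intermediate value theorem yields $s_n\to T$ with $\gamma(s_n)=x^*$, whence $b(x^*,T)=\lim_n b(\gamma(s_n),s_n)=0$. Applying this for every such $x^*$ produces an open interval $J\subset(x_0,x_0+\delta)\cap(x_0,x_2)$, adjacent to $x_0$ on the right, with $b(\cdot,T)\equiv 0$ on $J$.

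To derive a contradiction I would pass to the self-similar variables $(u_{x_0},v_{x_0})$ of \eqref{eq2A}. Combining \eqref{eq1C} with parabolic compactness, along some subsequence $s_j\to\infty$ one extracts $(u_{x_0}(\cdot,s_j),v_{x_0}(\cdot,s_j))\to(U,V)$ locally uniformly. Using $b\equiv 0$ on $J\times\{T\}$ together with parabolic Schauder estimates on compact subsets of $J$ (so that $b(x,t)=O(T-t)$ for $x$ in compacts of $J$), one shows $v_{x_0}(y,s_j)\to 0$ locally uniformly in $y>0$, so $V\equiv 0$ on $\{y>0\}$; forward uniqueness for the linear equation $V_s=V_{yy}-\tfrac{y}{2}V_y+(2U-1)V$ then propagates this to $V\equiv 0$ on all of $\mathbb{R}$. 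Consequently $U$ is a bounded solution of the real self-similar Fujita equation, and the Herrero--Velázquez classification (as invoked in the proof of Lemma~\ref{3CLem}) leaves only $U\equiv 0$ or $U\equiv 1$.

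The case $U\equiv 0$ is excluded immediately by Lemma~\ref{2BLem}, which forces $x_0$ not to be a blow-up point, contradicting the hypothesis. The case $U\equiv 1$ corresponds to the flat/ODE blow-up profile; by the Giga--Kohn/Velázquez analysis this entails that a full parabolic neighborhood of $x_0$ lies in the blow-up set, again contradicting isolation. The main obstacle is making rigorous the passage from $b(\cdot,T)\equiv 0$ on $J$ to $V\equiv 0$ on $\{y>0\}$, since the coefficient $a$ driving the $b$-equation is itself unbounded near $x_0$; I expect to handle this either via direct Schauder estimates on compact subsets of $J$ coupled with \eqref{eq1C}, or via a secondary rescaling at points $\xi_i\in J$ with $\xi_i\to x_0$ in the spirit of Lemma~\ref{3BCLem}, reducing to the controlled sign-definite setup of Section~3.
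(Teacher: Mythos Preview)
Your route is genuinely different from the paper's, and it has a gap more serious than the one you flag. The paper does not centre the self-similar variables at $x_0$; it centres at the midpoint $(x_0+x_1)/2$, which by isolation of $x_0$ is \emph{not} a blow-up point. Boundedness of $(a,b)$ near that point gives $|u|+|v|<c_1e^{-s}$ for $|y|<\delta e^{s/2}$, hence $\|v_s-({\cal A}-1)v\|_\rho=2\|uv\|_\rho\le Ce^{-s/2}\|v\|_{H^1_\rho}$; a lower-bound lemma for differential inequalities (\cite{Ackermann,Cohen,Ogawa}) then forces $v(s)=\alpha_k(1+o(1))e^{-\lambda_k s}\phi_k$ in $L^2_\rho$ for a single Hermite mode. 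That pins the sign of $v(y,s)$ at each fixed $y$ with $\phi_k(y)\neq0$ for large $s$, contradicting \eqref{5Aeq}, which makes the rescaled zero $e^{s/2}\bigl(\gamma(t)-\tfrac{x_0+x_1}{2}\bigr)$ swing between $-\infty$ and $+\infty$.

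The Herrero--Vel\'azquez step is where your argument breaks. In Lemma~\ref{3CLem} the alternative is not ``$U\equiv0$ or $U\equiv1$'' but ``$A$ is the ODE solution \emph{or} the origin is an isolated blow-up point of $A$''; the second option is then discarded via Lemma~\ref{3BCLem}, whose hypothesis is that a whole interval lies in the blow-up set of $(a,b)$. Here $x_0$ is isolated by assumption, so Lemma~\ref{3BCLem} is unavailable and the ``isolated'' alternative survives---indeed it is the expected outcome---leaving you with no contradiction. Even in the $U\equiv1$ branch your conclusion is not immediate: deducing that a neighbourhood of $x_0$ blows up requires control at $y$-scales of order $e^{s/2}$, far beyond locally uniform convergence along a subsequence. (Minor point: extending $V\equiv0$ from $\{y>0\}$ to all of $\R$ is spatial unique continuation, not forward uniqueness.) The structural difficulty is that by centring at $x_0$ you send the second blow-up point to infinity in the rescaled picture and discard precisely the oscillation of $\gamma$ that the paper exploits.
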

\begin{proof}
To derive contradiction,
we assume that $x_1>x_0$ is another blow-up point.
Since $x_0$ and $x_1$ are blow-up points,
we see from Theorem \ref{thm2} that
\begin{equation}\label{5Aeq}
 \liminf_{t\to T} \gamma(t) \leq x_0,
\hfive
 \limsup_{t\to T} \gamma(t) \geq x_1.
\end{equation}
Let $x_2=(x_0+x_1)/2$, $\delta=(x_1-x_0)/2$ and set
\[
 u(y,s) = e^{-s}a(x_2+e^{-s/2}y,T-e^{-s}),
\hfive
 v(y,s) = e^{-s}b(x_2+e^{-s/2}y,T-e^{-s}).
\]
Since $(a,b)$ is uniformly bounded on $(x_2-\delta,x_2+\delta)$,
$(u,v)$ satisfies
\[
 \sup_{|y|<\delta e^{s/2}}(|u(y,s)|+|v(y,s)|)
<
 c_1e^{-s}
\]
for some $c_1>0$.
Therefore
we get from \eqref{5FFGeq} and \eqref{eq1C}
\begin{align*}
 \int_{-\infty}^\infty|uv|^2\rho dy
&<
 c_1^2e^{-2s}\int_{|y|<\delta e^{s/2}}|v|^2\rho dy
+ 
 \delta^{-2}e^{-s}\|u\|_{L^\infty(\R)}^2\int_{|y|>\delta e^{s/2}}|y|^2|v|^2\rho dy
\\
&<
 \left( c_1^2+\delta^{-2} \right)e^{-s}\|v\|_{H^1_\rho(\R)}^2.
\end{align*}
Let ${\cal A}=\pa_y^2-\frac{y}{2}\pa_y$.
Then we see that
\[
 \|v_s-({\cal A}-1)v\|_\rho
<
 2\|uv\|_\rho
<
 2\sqrt{c_1^2+\delta^{-2}}e^{-s/2}\|v\|_{H^1_\rho(\R)}.
\]
Therefore
from Lemma A.16 \cite{Ackermann} (see also \cite{Cohen,Ogawa}),
we obtain $\|v(s)\|_\rho \geq ce^{-\mu s}$ for some $\mu>0$.
As a consequence,
there exists $k\in\N$ such that
\[
 v(s) = \alpha_k(1+o(1))e^{-\lambda_ks}\phi_k
\hfive \text{in } L_\rho^2(\R).
\]
However
this contradicts \eqref{5Aeq},
which completes the proof.
\end{proof}

Let $x_0\in\R$ be a blow-up point of $(a,b)$.
If $x_0$ is an isolated blow-up point,
Proposition \ref{5APro} implies that
no other blow-up points exist on $\R$.
Then
we see that $\gamma(t)$ is continuous on $(0,T]$.
In fact,
if $\gamma$ is not continuous at $t=T$,
it satisfies
\[
 \liminf_{t\to T}\gamma(t) < \liminf_{t\to T}\gamma(t).
\]
However by the same argument as in the proof of Lemma \ref{5APro},
we derive contradiction.
Therefore
if $x_0$ is an isolated blow-up point of $(a,b)$,
the proof is completed.
We here consider the case where there are no isolated blow-up points.
Let $x_1>x_0$ be another blow-up point.
Then the interval $(x_0,x_1)$ is included in the blow-up set.
By shifting the origin,
we can assume that
\begin{equation}\label{5Beq}
 \text{the interval $(-L,L)$ is included in the blow-up set}.
\end{equation}
We put $e^{-s}=T-t$ and
\[
 u(y,s) = (T-t)a(e^{-s/2}y,t),
\hfive
 v(y,s) = (T-t)b(e^{-s/2}y,t).
\]
We denote a zero of $v(s)$ by $\Gamma(s)$,
which satisfies $\Gamma(s)=e^{s/2}\gamma(t)$.

\begin{lem}\label{5BLem}
For any $\epsilon_0>0$ there exists $K>0$ such that
if $|v(y,s)|>\epsilon_0$ for some $|y|<s$ and $s\gg1$,
then it holds that $|y-\Gamma(s)|<K$. 
\end{lem}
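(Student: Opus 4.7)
The plan is to prove Lemma \ref{5BLem} by contradiction, via a scaling argument analogous to Lemma \ref{3BCLem} combined with Theorem \ref{thm2} applied to a limit solution. Suppose the conclusion fails: there exist $\epsilon_0 > 0$, sequences $s_i \to \infty$ and $y_i$ with $|y_i| < s_i$, $|v(y_i, s_i)| > \epsilon_0$, and $|y_i - \Gamma(s_i)| \to \infty$. Set $\lambda_i = e^{-s_i}$, $t_i = T - \lambda_i$, and $\xi_i = e^{-s_i/2} y_i$. Since $|\xi_i| \leq s_i e^{-s_i/2} \to 0$, we have $\xi_i \in (-L, L)$ for $i$ large, and by \eqref{5Beq} each such $\xi_i$ is a blow-up point of $(a, b)$. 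Define
\[
 a_i(x, \tau) = \lambda_i a(\xi_i + \sqrt{\lambda_i}\, x, t_i + \lambda_i \tau),
\qquad
 b_i(x, \tau) = \lambda_i b(\xi_i + \sqrt{\lambda_i}\, x, t_i + \lambda_i \tau);
\]
the bound \eqref{eq1C} gives $\sup_{x \in \R}(|a_i| + |b_i|) < c/(1 - \tau)$, so up to a subsequence $(a_i, b_i) \to (A, B)$ in $L^\infty_{\mathrm{loc}}(\R \times (0, 1))$, where $(A, B)$ solves \eqref{eq1A} and inherits the bound $|A| + |B| < c/(1 - \tau)$.

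The crucial step is to pin down the sign of the limit $B$. The identity
\[
 b_i(y, 0) = v_{\xi_i}(y, s_i) = v(y_i + y, s_i)
\]
shows that $b_i(\cdot, 0)$ is a pure translation of $v(\cdot, s_i)$, whose only zero is $\Gamma(s_i)$; hence $b_i(\cdot, 0)$ has its unique zero at $y = \Gamma(s_i) - y_i$, which tends to $\pm \infty$. After a further subsequence I may assume $\Gamma(s_i) - y_i \to +\infty$ (the other sign is symmetric), so that for every fixed $R > 0$, $b_i(y, 0) < 0$ on $(-R, R)$ for $i$ large. Parabolic regularity applied to $v_{\xi_i}$ yields uniform smoothness of $b_i(\cdot, 0)$ on compact sets, so the convergence is locally uniform and $|B(0, 0)| \geq \epsilon_0$. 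To control the sign globally, I would split $b_i(\cdot, 0)$ into its negative part (supported on $\{y < \Gamma(s_i) - y_i\}$) and its positive part (supported on $\{y > \Gamma(s_i) - y_i\}$); a heat-kernel representation, together with a standard error estimate $O(\mathrm{erfc}(\Gamma(s_i) - y_i))$ on bounded sets, shows the positive part contributes nothing in the limit, so $B(y, \tau) \leq 0$ throughout $\R \times [0, 1)$. Since $B(0, 0) \leq -\epsilon_0$, the strong maximum principle for $B_\tau = B_{xx} + 2 A B$ upgrades this to $B < 0$ strictly on $\R \times (0, 1)$.

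To close the argument, I apply Theorem \ref{thm2} to $(A, B)$: since $B$ is strictly sign-definite in a full neighborhood of any candidate blow-up point $(x_*, 1)$, the theorem forbids blow-up there, so $(A, B)$ does not blow up anywhere at $\tau = 1$. In particular $(A, B)$ does not blow up at $x = 0$, so Lemma \ref{2CLem} implies $x = 0$ is not a blow-up point of $(a_i, b_i)$ at $\tau = 1$ for $i$ large. Unwinding the scaling, this says $\xi_i$ is not a blow-up point of $(a, b)$ at $t = T$, contradicting $\xi_i \in (-L, L)$, which lies in the blow-up set.

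The main obstacle is the sign analysis of $B$: one must establish both $B \leq 0$ everywhere and $B \not\equiv 0$. The first requires a quantitative heat-kernel estimate showing that the far-away positive initial data does not propagate into bounded regions; the second requires transferring the quantitative lower bound $|v(y_i, s_i)| > \epsilon_0$ into the limit via uniform parabolic regularity. Both rely on the fact that $|\Gamma(s_i) - y_i| \to \infty$. Once $B$ is shown to be strictly sign-definite, Theorem \ref{thm2} and Lemma \ref{2CLem} close the argument quickly.
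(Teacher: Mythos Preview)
Your proposal is correct and takes essentially the same approach as the paper's proof: rescale around $\xi_i=\sqrt{\lambda_i}\,y_i$, extract a limit $(A,B)$ whose $B$-component inherits a fixed sign because the unique zero $\Gamma(s_i)-y_i$ of $b_i(\cdot,0)=v(y_i+\cdot,s_i)$ escapes to infinity, then invoke Theorem~\ref{thm2} and Lemma~\ref{2CLem} to contradict the fact that $\xi_i\to 0\in(-L,L)$ lies in the blow-up set. Your heat-kernel/$\mathrm{erfc}$ step is unnecessary overkill: the pointwise limit already gives $B(\cdot,0)$ of one sign on all of $\R$ with $|B(0,0)|\geq\epsilon_0$, and the ordinary maximum principle for the linear equation $B_\tau=B_{xx}+2AB$ then forces $B<0$ on $\R\times(0,1)$, which is exactly what the paper uses.
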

\begin{proof}
We prove by contradiction.
Assume that there exist $\epsilon_0>0$, $\{y_i\}_{i\in\N}$ and $\{s_i\}_{i\in\N}$
satisfying $|y_i|<s_i$ and $s_i\to\infty$
such that
\begin{equation}\label{5Ceq}
 |v(y_i,s_i)|>\epsilon_0,
\hfive\hfive
 |y_i-\Gamma(s_i)|>i.
\end{equation}
We put $\lambda_i=e^{-s_i}$, $t_i=T-e^{-s_i}$ and
\[
 a_i(x,\tau) = \lambda_ia(\sqrt{\lambda_i}y_i+\sqrt{\lambda_i}x,t_i+\lambda_i\tau),
\hfive
 b_i(x,\tau) = \lambda_ib(\sqrt{\lambda_i}y_i+\sqrt{\lambda_i}x,t_i+\lambda_i\tau).
\]
Then
\eqref{eq1C} implies
\begin{equation}\label{5Deq}
 \sup_{x\in\R}(|a_i(x,\tau)|+|b_i(x,\tau)|) < \frac{c_1}{1-\tau}
\hfive\text{for } \tau\in(0,1)
\end{equation}
with some $c_1>0$.
Furthermore
we easily see that $a_i(x,0) = u(y_i+x,s_i)$ and $b_i(x,0) = v(y_i+x,s_i)$.
Therefore
it follows from \eqref{5Ceq} that
\begin{equation}\label{5Eeq}
 |b_i(x,0)|>0
\hfive\text{for } |x|<i.
\end{equation}
By taking a subsequence,
we get
\[
 (a_i,b_i) \to (A,B).
\]
Then
we get from \eqref{5Deq} and \eqref{5Eeq},
\[
 |B(x,0)|>0 \hthree\text{for } x\in\R,
\hfive
 \sup_{x\in\R}(|A(x,\tau)|+|B(x,\tau)|) < \frac{c_1}{1-\tau} 
\hthree\text{for } \tau\in(0,1).
\]
From Theorem \ref{thm2},
we find that $(A,B)$ does not blow up on the origin at $\tau=1$.
As a consequence,
from Lemma \ref{2CLem},
the origin is not a blow-up point of $(a_i,b_i)$ at $\tau=1$ for large $i\in\N$,
which implies that $\sqrt{\lambda_i}y_i$ is not a blow-up point of $(a,b)$ for large $i\in\N$.
However since $\sqrt{\lambda_i}y_i\to0$ as $i\to\infty$,
this contradicts \eqref{5Beq}.
\end{proof}

\begin{lem}\label{5CLem}
For any $\delta>0$ and $r>0$
there exists $m_0>0$ such that
if $\dis\|v(s)\|_{L^\infty(-1,1)}<m_0$ for some $s\gg1$,
then it holds that
\[
 \sup_{-r<y<r}\left( |u(y,s)-1|+|u_y(y,s)| \right) < \delta.
\]
\end{lem}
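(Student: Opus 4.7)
The plan is to mimic the structure of Lemma~\ref{3CLem}, with the extra work caused by the fact that here $b$ is sign-changing rather than nonnegative. I would argue by contradiction. Suppose the claim fails: there exist $\delta_0,r_0>0$ and sequences $s_i\to\infty$ with $\|v(s_i)\|_{L^\infty(-1,1)}\to 0$ while
$\sup_{|y|<r_0}\bigl(|u(y,s_i)-1|+|u_y(y,s_i)|\bigr)\geq\delta_0$.
Set $\lambda_i=e^{-s_i}$, $t_i=T-\lambda_i$, and rescale parabolically
\[
 a_i(x,\tau)=\lambda_i a(\sqrt{\lambda_i}x,t_i+\lambda_i\tau),\qquad
 b_i(x,\tau)=\lambda_i b(\sqrt{\lambda_i}x,t_i+\lambda_i\tau).
\]
By \eqref{eq1C} one has $|a_i|+|b_i|\leq c/(1-\tau)$, and parabolic regularity yields a subsequential limit $(a_i,b_i)\to(A,B)$ in $C^{2,1}_{\mathrm{loc}}(\R\times(-\infty,1))$ solving \eqref{eq1A}. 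Since $a_i(x,0)=u(x,s_i)$ and $b_i(x,0)=v(x,s_i)$, the hypotheses translate to
\[
 B(x,0)=0 \text{ on } (-1,1),\qquad
 \sup_{|x|\leq r_0}\bigl(|A(x,0)-1|+|A_x(x,0)|\bigr)\geq\delta_0.
\]

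The central step is to upgrade $B(\cdot,0)=0$ on $(-1,1)$ to $B\equiv 0$ on $\R\times[0,1)$. The rescaled $b_i(\cdot,\tau)$ inherits from $b$ the one-sign-change structure, with its unique zero at $\Gamma_i(\tau)=\gamma(t_i+\lambda_i\tau)/\sqrt{\lambda_i}$, $b_i\leq 0$ to its left and $b_i\geq 0$ to its right; this passes to the limit. After extracting a further subsequence, either $\Gamma_i(0)\to\Gamma_*\in\R$ or $|\Gamma_i(0)|\to\infty$. In either case, Lemma~\ref{5BLem}, which localizes the set $\{|v|>\epsilon\}$ inside a $K(\epsilon)$-neighbourhood of $\Gamma(s)$, forces $B(\cdot,0)$ to be arbitrarily small outside every fixed neighbourhood of $\Gamma_*$ (or outside every bounded set, if $|\Gamma_*|=\infty$). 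Combined with the sign constraint and $B(\cdot,0)=0$ on $(-1,1)$, this gives $B(\cdot,0)\equiv 0$, and forward uniqueness for the linear equation $B_\tau=B_{xx}+2AB$ yields $B\equiv 0$ on $\R\times[0,1)$. With $B\equiv 0$ in hand, $A$ solves the Fujita equation $A_\tau=A_{xx}+A^2$; because the origin is a blow-up point of $(a,b)$, Lemma~\ref{2CLem} forces $A$ to blow up at the origin at $\tau=1$; Lemma~\ref{3BCLem} rules out isolated blow-up of the limit; and the Herrero--Vel\'azquez classification cited in the proof of Lemma~\ref{3CLem} leaves only $A\equiv 1$. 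But then $A(\cdot,0)\equiv 1$ and $A_x(\cdot,0)\equiv 0$, contradicting $\sup_{|x|\leq r_0}(|A(x,0)-1|+|A_x(x,0)|)\geq\delta_0$.

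The main obstacle is showing $B\equiv 0$. In Lemma~\ref{3CLem} this was essentially a one-line application of the strong maximum principle, because $b\geq 0$ was available. Here we only know $B(\cdot,0)$ vanishes on the open interval $(-1,1)$, and we must rule out a nonzero contribution coming from $|x|\geq 1$, where $b$ has no sign information. The key new input is Lemma~\ref{5BLem}: it captures precisely the fact that in the self-similar regime all the $v$-mass concentrates near the single zero, so a nonzero limit would have to be supported in a bounded region around $\Gamma_*$, which conflicts with both the one-sign-change structure and the hypothesis that $B$ already vanishes on $(-1,1)$. A delicate point is that the constant $K(\epsilon)$ in Lemma~\ref{5BLem} is not quantitative; if $K(\epsilon)\to\infty$ as $\epsilon\to 0$ the pointwise argument above must be supplemented by a parabolic unique-continuation argument for $B_\tau=B_{xx}+2AB$, together with the one-sign-change constraint, to rigorously eliminate the remaining nontrivial configurations of $B(\cdot,0)$.
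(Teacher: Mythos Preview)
Your outline is exactly what the paper intends: it states only that ``the proof of this lemma is the same as that of Lemma~\ref{3CLem}'' and gives no further details, so the contradiction--rescaling scheme, the appeal to Lemma~\ref{2CLem}, Lemma~\ref{3BCLem}, and the Herrero--Vel\'azquez dichotomy are all on target.

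The one place where your write-up takes a detour is the step $B\equiv 0$. Your primary route via Lemma~\ref{5BLem} does not close on its own, as you yourself flag: knowing that $|B(\cdot,0)|\leq\epsilon$ outside a $K(\epsilon)$-neighbourhood of $\Gamma_*$ only yields decay at infinity and is perfectly compatible with, say, a negative bump on $(-K,-1)$ and a positive bump on $(1,K)$, so the sign constraint plus $B=0$ on $(-1,1)$ do not force $B(\cdot,0)\equiv 0$. The clean argument is the one you list as the fallback, and it needs no help from Lemma~\ref{5BLem}: the limit $B$ solves the linear equation $B_\tau=B_{xx}+2AB$ on $\R\times(-\infty,1)$, and by the zero-number theory of Matano/Angenent (already invoked at the start of Section~4) a nontrivial solution has only isolated spatial zeros at each interior time. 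Since $B(\cdot,0)$ vanishes on the whole interval $(-1,1)$, it follows that $B(\cdot,0)\equiv 0$, and then forward uniqueness gives $B\equiv 0$ on $[0,1)$. With this in hand, the rest of your argument goes through verbatim and matches the paper's intended proof.
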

\begin{proof}
Since the proof of this lemma is the same as that of Lemma \ref{3CLem},
we omit the detail.
\end{proof}

\begin{lem}\label{5DLem}
$\dis\liminf_{s\to\infty}\|v(s)\|_{L^\infty(-1,1)}=0$.
\end{lem}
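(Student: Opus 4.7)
The plan is to argue by contradiction, exploiting the fact that if $\|v(s)\|_{L^\infty(-1,1)}$ stays bounded away from zero, then Lemma \ref{5BLem} forces the zero $\Gamma(s)$ of $v(s)$ to remain bounded, which in turn forces $\gamma(t)\to 0$; this then contradicts Theorem \ref{thm2} because $b$ would have a definite sign near any nonzero point of $(-L,L)$.

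I would start by assuming that the conclusion fails, so that there exist $\epsilon_0>0$ and $s_0>0$ with $\|v(s)\|_{L^\infty(-1,1)}>\epsilon_0$ for every $s>s_0$. For each such $s$ I can pick $y_s\in[-1,1]$ with $|v(y_s,s)|>\epsilon_0/2$. Applying Lemma \ref{5BLem} with this $\epsilon_0/2$ (and noting $|y_s|<1<s$ for $s\gg 1$) yields a constant $K>0$ such that $|y_s-\Gamma(s)|<K$. Consequently $|\Gamma(s)|<K+1$ for all large $s$.

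Translating back via $\Gamma(s)=e^{s/2}\gamma(t)$ with $T-t=e^{-s}$, this boundedness gives
\[
 |\gamma(t)|<(K+1)\sqrt{T-t}\longrightarrow 0 \hfive\text{as } t\to T.
\]
Now I would pick any $\xi\in(-L,L)$ with $\xi\ne 0$, say $\xi=L/2$. Since $\gamma(t)\to 0$, there exists $t_1<T$ with $\gamma(t)<\xi/2$ for all $t\in(t_1,T)$. Using the sign structure of $b$ stated just before Proposition \ref{5APro} ($b(x,t)>0$ for $x>\gamma(t)$), it follows that
\[
 b(x,t)>0 \hfive\text{for } (x,t)\in(\xi/2,\,(L+\xi)/2)\times(t_1,T),
\]
which is a neighborhood of $(\xi,T)$ on which $b$ has a definite sign.

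By Theorem \ref{thm2}, this implies that $\xi$ is not a blow-up point of $(a,b)$. However, by hypothesis \eqref{5Beq} of this part of the argument, the entire interval $(-L,L)$ lies in the blow-up set, so $\xi$ is a blow-up point, a contradiction. The main subtlety is simply to verify the application of Lemma \ref{5BLem}—in particular to check that the $\epsilon_0$ chosen for that lemma works uniformly in $s$—and to ensure that the sign information on $b$ in a full two-dimensional neighborhood of $(\xi,T)$ really follows from $\gamma(t)\to 0$ and the fixed sign convention of $b$; both are straightforward once $\gamma(t)\to 0$ is established.
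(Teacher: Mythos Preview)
Your argument is correct and uses the same ingredients as the paper's proof---namely Theorem~\ref{thm2}, Lemma~\ref{5BLem}, and the relation $\Gamma(s)=e^{s/2}\gamma(t)$---arranged contrapositively. The paper argues directly (from \eqref{5Beq} and Theorem~\ref{thm2} one gets $\liminf_{t\to T}\gamma(t)\le -L$ and $\limsup_{t\to T}\gamma(t)\ge L$, hence $|\Gamma(s)|$ is unbounded along a sequence, and Lemma~\ref{5BLem} then forces $\|v(s)\|_{L^\infty(-1,1)}$ to be small along that sequence), whereas you suppose $\|v(s)\|_{L^\infty(-1,1)}$ stays bounded below and derive $\gamma(t)\to 0$, contradicting the same oscillation; the two are logically equivalent.
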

\begin{proof}
Since the interval $(-L,L)$ is included in the blow-up set,
we get from Theorem \ref{thm2} that
\[
 \liminf_{t\to T}\gamma(t)\leq-L,
\hfive
 \limsup_{t\to T}\gamma(t)\geq L.
\]
Therefore
since $\Gamma(s)=e^{s}\gamma(t)$,
Lemma \ref{5BLem} proves this lemma.
\end{proof}

\begin{pro}\label{5BPro}
$\dis\lim_{s\to\infty}\|v(s)\|_{L^\infty(-1,1)}=0$.
\end{pro}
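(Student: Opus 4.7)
The plan is to argue by contradiction, combining Lemma~\ref{5DLem} with a blow-up rescaling at a sequence of times where $\|v(s)\|_{L^\infty(-1,1)}$ fails to be small. Assuming $\limsup_{s\to\infty}\|v(s)\|_{L^\infty(-1,1)}\geq 2\alpha>0$, I would pick $s_i\to\infty$ and $y_i\in[-1,1]$ with $|v(y_i,s_i)|\geq\alpha$. Applying Lemma~\ref{5BLem} with $\epsilon_0=\alpha$ yields $|y_i-\Gamma(s_i)|<K$ and hence $|\Gamma(s_i)|\leq K+1$; passing to a subsequence I may assume $y_i\to y^*\in[-1,1]$ and $\Gamma(s_i)\to\Gamma^*\in\R$.

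Next I would perform the standard parabolic rescaling $\lambda_i=e^{-s_i}$, $t_i=T-\lambda_i$,
\[
 a_i(x,\tau)=\lambda_i a(\sqrt{\lambda_i}x,t_i+\lambda_i\tau),
 \hfive
 b_i(x,\tau)=\lambda_i b(\sqrt{\lambda_i}x,t_i+\lambda_i\tau).
\]
By \eqref{eq1C}, $|a_i|+|b_i|\leq c/(1-\tau)$, and parabolic regularity furnishes a subsequential limit $(a_i,b_i)\to(A,B)$ in $L^\infty_{\mathrm{loc}}(\R\times[0,1))$ with $(A,B)$ a solution of \eqref{eq1A} satisfying \eqref{eq1C} on $[0,1)$. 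By construction $|B(y^*,0)|\geq\alpha$ and $B(\Gamma^*,0)=0$.

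The heart of the argument is to show simultaneously that $(A,B)$ blows up at every point of $\R$ at $\tau=1$ and that $B(\cdot,\tau)$ admits at most one sign change for $\tau\in[0,1)$. For the first claim, since $0\in(-L,L)$ lies in the blow-up set of $(a,b)$ by \eqref{5Beq}, for any fixed $x^*\in\R$ the point $\sqrt{\lambda_i}x^*$ belongs to $(-L,L)$ for large $i$ and is therefore a blow-up point of $(a,b)$; equivalently $x^*$ is a blow-up point of $(a_i,b_i)$ at $\tau=1$, so the contrapositive of Lemma~\ref{2CLem} forces $(A,B)$ to blow up at $(x^*,1)$. Theorem~\ref{thm2} applied to $(A,B)$ then ensures that $B$ changes sign in any neighborhood of every $(x^*,1)$, which produces arbitrarily many sign changes of $B(\cdot,\tau)$ as $\tau\to1$. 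For the second claim, each $b_i(\cdot,0)=v(\cdot,s_i)$ has the unique zero $\Gamma(s_i)$ (inherited from the hypothesis that $b_0$ has exactly one zero together with the Sturmian monotonicity \cite{Matano}), so $b_i(\cdot,\tau)$ admits at most one sign change for $\tau\in[0,1)$; this bound transfers to $B$ by $C^0_{\mathrm{loc}}$ convergence. The two conclusions are incompatible and yield the desired contradiction.

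The main obstacle I anticipate is the semi-continuity of the sign-change count under passage to the limit: each sign change of $B(\cdot,\tau)$ must be shown to persist for $b_i(\cdot,\tau)$ when $i$ is large, which follows from locally uniform convergence together with the strict sign separation at a non-degenerate sign change. The remaining ingredients are routine applications of the rescaling machinery developed in Section~2.
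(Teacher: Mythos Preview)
Your strategy---rescale along a sequence where $|v(y_i,s_i)|\geq\alpha$, pass to a limit $(A,B)$, show that $(A,B)$ blows up at every point of $\R$ at $\tau=1$ and retains at most one sign change---is set up correctly, but the contradiction does not close. The sentence ``Theorem~\ref{thm2} applied to $(A,B)$ then ensures that $B$ changes sign in any neighborhood of every $(x^*,1)$, which produces arbitrarily many sign changes of $B(\cdot,\tau)$'' is the gap. Theorem~\ref{thm2} only tells you that $B$ takes both signs somewhere in each \emph{space--time} neighborhood of $(x^*,1)$; it does not force any single spatial slice $B(\cdot,\tau)$ to have many zeros. If $B(\cdot,\tau)$ has exactly one zero $\Gamma_B(\tau)$ for all $\tau\in(0,1)$, the information you extract is merely $\liminf_{\tau\to1}\Gamma_B(\tau)=-\infty$ and $\limsup_{\tau\to1}\Gamma_B(\tau)=+\infty$: a single zero that oscillates without bound. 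This is fully compatible with ``at most one sign change for every $\tau$'', so no contradiction follows.

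The paper's proof is substantially more technical for exactly this reason: a zero-counting/compactness argument alone does not rule out the oscillating-zero scenario. Instead the paper decomposes $v$ in $L^2_\rho$ along the Hermite eigenfunctions $\phi_0,\phi_1,\phi_2,\dots$ of $\mathcal A$ and studies the resulting ODE system for the coefficients on the intervals $(s_i^-,s_i^+)$ where $\|v(s)\|_{L^\infty(-1,1)}<\bar m$. The single-zero hypothesis enters quantitatively: if the $\phi_2$-component dominated (in the sense $\kappa_i<0$ and $Z_i<\bar\zeta Y_i$), $v_i$ would be forced to have at least two zeros. This excludes one dynamical regime; in the remaining regime the system \eqref{5Leq} forces exponential decay of $\|v_i(s)\|_\rho$, after which Lemmas~\ref{5ALem}, \ref{5HLem} and the classification argument from Proposition~\ref{5APro} give the contradiction. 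Your approach would need an additional dynamical ingredient to eliminate the oscillating-zero behavior of $(A,B)$, at which point you are essentially back to the paper's argument.
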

The proof of this Proposition is given in Section \ref{5Sub},
which is a crucial step in this paper.

\subsection{Proof of Proposition \ref{5BPro}}\label{5Sub}
This proof is based on the argument in \cite{Filippas}.
We carefully investigate the behavior of solutions
through a dynamical system approach in $L_\rho^2(\R)$.
Since $v(s)$ has exact one zero for $s>s_T$,
we focus on the behavior of the corresponding eigenmode of $v(s)$.

\subsubsection{Choice of $\bar{\eta}$ $\bar{\zeta}$ $\bar{\epsilon}$ $\bar{\delta}$, $\bar{R}$}
Let ${\cal A} = \pa_{yy}-\frac{y}{2}\pa_y$.
It is known that
$H_\rho^1(\R)$ is spanned by eigenfunctions $\{\phi_i\}_{i\in\N}$ of ${\cal A}$.
A function $v$ in $H_\rho^1(\R)$ is decomposed to
\[
 v = \alpha\phi_0+\beta\phi_1+\gamma \phi_2+w.
\]
Since $\phi_2(y)=c_1(y^2-1)$ for some $c_1>0$,
it follows that $\phi_2(0)=-c_1$ and $\phi_2(2)=3c_1$.
Here we recall the inequality:
$\|w\|_{L^\infty(-2,2)} < c\|w\|_{H_\rho^1(\R)}$.
Therefore
there exists $\epsilon_1>0$ such that
if  $v\in H_\rho^1(\R)$ satisfies $\alpha^2+\beta^2+\|w\|_{H_\rho^1(\R)}^2 < \epsilon_1\gamma^2$,
then $v$ has at least two zeros in  $(-2,2)$.
Here we fix $\bar{\eta}>0$, $\bar{\zeta}>0$ and $\bar{\epsilon}\in(0,1/4)$ such that
\begin{equation}\label{5Feq}
 2\left( \frac{1+\bar{\zeta}}{\bar{\eta}}+\bar{\zeta} \right) < \epsilon_1,
\hfive
 \bar{\epsilon}
 \left( \frac{1}{\bar{\eta}}\left( \frac{1}{\bar{\zeta}}+1 \right)+\frac{1}{\bar{\zeta}} \right)
< \frac{1}{8},
\hfive
 \left( \frac{1}{4}-2\bar{\epsilon} \right)\bar{\eta}-\left( 2+\bar{\eta}^2 \right)\bar{\epsilon}
>0,
\hfive
 \bar{\epsilon}\bar{\eta}<\frac{1}{8}.
\end{equation}
Furthermore
we put
\[
 \bar{M} = \sup_{y\in\R,s>s_T}(|u(y,s)-1|+|u_y(y,s)|).
\]
By using \eqref{5FFGeq},
we can fix $\bar{\delta}>0$ and $\bar{R}>0$ such that
if $|P(y)|<\bar{\delta}$ for $|y|<\bar{R}$ and $\|P\|_{L^\infty(\R)}<\bar{M}$,
then it holds that
\[
 \int_{-\infty}^\infty
 P(y)^2
 \left( \sum_{k=0}^2\left(|\phi_k|^2+|\phi_k'|^2\right) \right)
 \rho dy
<
 \left( \frac{\bar{\epsilon}}{24} \right)^2,
\hfive\hfive
 \int_{-\infty}^\infty|P(y)|v^2\rho dy
<
 \frac{\bar{\epsilon}}{8}\|v\|_{H_\rho^1(\R)}^2.
\]

\subsubsection{Assumptions and setting}
To prove Proposition \ref{5BPro},
we assume
\begin{equation}\label{5FGeq}
 m_*=\limsup_{s\to\infty}\|v(s)\|_{L^\infty(-1,1)}>0
\end{equation}
throughout this section.
Since $v$ satisfies $v_s={\cal A}v+K(y,s)v$ with $K(y,s)=-1+2u$,
this assumption is equivalent to $\limsup_{s\to\infty}\|v(s)\|_\rho>0$.
We apply Lemma \ref{5CLem} with $\delta=\bar{\delta}$ and $r=\bar{R}$.
Then there exists $\bar{m}\in(0,m_*)$ such that
if $\|v(s)\|_{L^\infty(-1,1)}<\bar{m}$, then it holds that
\[
 \sup_{-\bar{R}<y<\bar{R}}(|u(y,s)-1|+|u_y(y,s)|) < \bar{\delta}.
\]
From Lemma \ref{5DLem},
there exists $\{s_i\}_{i\in\N}$ ($s_i\to\infty$) such that
$\|v(s_i)\|_{L^\infty(-1,1)}\to0$ as $i\to\infty$.
By definition of $m_*$ ($\bar{m}<m_*$),
we can choose $s_i^-$ and $s_i^+$ ($s_i^-<s_i<s_i^+$) by
\[
\begin{array}{c}
 \|v(s)\|_{L^\infty(-1,1)} < \bar{m} \hfive \text{for } s\in(s_i^-,s_i^+),
\hfive\hfive
 \|v(s_i^{\pm})\|_{L^\infty(-1,1)} = \bar{m}.
\end{array}
\]
Since $\|v(s_i)\|_{L^\infty(-1,1)}\to0$ as $i\to\infty$,
we easily see that $\|v(s_i)\|_\rho+\|v_s(s_i)\|_\rho\to0$ as $i\to\infty$.
Therefor
it follow that $s_i^+-s_i\to\infty$ as $i\to\infty$.
Put
$\Delta_i = s_i^+-s_i^-$
$(\Delta_i\to\infty)$
and
\[
 u_i(y,s) = u(y,s_i^-+s),
\hfive
 v_i(y,s) = v(y,s_i^-+s).
\]
To analyze the dynamics of $v_i(s)$ in $L_\rho^2(\R)$,
we decompose a function $v_i$ by using eigenfunctions of ${\cal A}$.
\begin{equation}\label{5Geq}
 v_i
=
 \alpha_i\phi_0+\beta_i\phi_1+\gamma_i\phi_2+w_i,
\hfive\hfive
 \pa_yv_i
=
 \mu_i\phi_0+\nu_i\phi_1+q_i.
\end{equation}

\begin{lem}\label{5ELem}
For any $d>0$,
it holds that
\[
 \liminf_{i\to\infty}\inf_{0<s<d}\|v_i(s)\|_\rho>0,
\hfive\hfive 
 \liminf_{i\to\infty}\inf_{0<s<d}(|\alpha_i(s)|+|\beta_i(s)|)>0.
\]
\end{lem}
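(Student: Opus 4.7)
Both parts proceed by contradiction, using a compactness extraction of a subsequential limit $(u_{i_k}, v_{i_k}) \to (U, V)$ in $C^{1,0}_{\mathrm{loc}}(\R \times \R)$. The limit exists because \eqref{eq1C} gives uniform $L^\infty$ control on $(u_i, v_i)$ and parabolic regularity yields equicontinuity on compacts; the limit $V$ satisfies the linear equation $V_s = V_{yy} - \tfrac{y}{2}V_y - V + 2UV$ with bounded coefficient $U$. The Gaussian decay of $\rho$ combined with the uniform $L^\infty$ bound promotes $L^\infty_{\mathrm{loc}}$ convergence to convergence of $\|v_{i_k}(s)\|_\rho$ and of the projections $\alpha_{i_k}(s)$, $\beta_{i_k}(s)$ pointwise in $s$, and equicontinuity in $s$ (from the bounded-coefficient evolution) upgrades this to uniform convergence on $[0,d]$.

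For the first claim, suppose $\|v_{i_k}(s_k)\|_\rho \to 0$ along a subsequence with $s_k \to s_* \in [0,d]$. Then $V(s_*) \equiv 0$, and backward uniqueness for the linear parabolic equation forces $V \equiv 0$ on $[0,s_*]$. However, the defining identity $\|v_{i_k}(0)\|_{L^\infty(-1,1)} = \bar m$ passes to the limit to give $\|V(0)\|_{L^\infty(-1,1)} = \bar m > 0$, a contradiction.

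For the second claim, suppose $|\alpha_{i_k}(s_k)| + |\beta_{i_k}(s_k)| \to 0$ with $s_k \to s_*$. Convergence of projections gives $\alpha^V(s_*) = \beta^V(s_*) = 0$, and Part 1 gives $\|V(s_*)\|_\rho \geq c_0 > 0$. Since $v_i(s)$ has exactly one sign change for all $s \in [0, \Delta_i]$ (inherited from $b$) and the sign-change count is lower semicontinuous under $C^{1,0}_{\mathrm{loc}}$ limits of parabolic solutions (Angenent-Matano), $V(s)$ has at most one sign change for every $s \geq 0$. The geometric dichotomy built into the choice of $\epsilon_1$ in \eqref{5Feq} then forces $\|w^V(s_*)\|_{H_\rho^1}^2 \geq \epsilon_1 (\gamma^V(s_*))^2$. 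To derive a contradiction, one evolves $V$ forward from $s_*$: Lemma \ref{5CLem} combined with $\|v_i\|_{L^\infty(-1,1)} < \bar m$ on $[0, \Delta_i]$ gives $|U - 1| \leq \bar\delta$ on $(-\bar R, \bar R)$ uniformly in $s$, so the operator ${\cal A} - 1 + 2U$ is a small perturbation of ${\cal A} + 1$ whose spectrum is $\{1,\tfrac12,0,-\tfrac12,\ldots\}$. Starting from $\alpha^V(s_*) = \beta^V(s_*) = 0$, the two unstable modes grow only through the $O(\bar\delta)$ coupling, while $w^V$ decays at least like $e^{-s/2}$ and $\gamma^V$ stays of order one; hence for some $s' > s_*$ we obtain $(\alpha^V(s'))^2 + (\beta^V(s'))^2 + \|w^V(s')\|_{H_\rho^1}^2 < \epsilon_1(\gamma^V(s'))^2$. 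By \eqref{5Feq}, $V(s')$ then has at least two sign changes in $(-2,2)$, contradicting the one-sign-change bound.

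The main difficulty is the perturbative mode analysis in the last step: one must track $\alpha^V, \beta^V$ starting from zero against the growth triggered by the $O(\bar\delta)$ coupling and keep this growth below the geometric threshold $\epsilon_1 (\gamma^V)^2$ long enough for the decay of $w^V$ to close the inequality on the right-hand side of \eqref{5Feq}. A secondary issue is that the lower-semicontinuity of the sign-change count under local uniform limits of parabolic solutions, though standard in the Angenent-Matano framework, needs to be stated precisely in this functional setting.
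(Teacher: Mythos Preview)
Your argument for the first claim is correct and matches the paper's: extract a subsequential limit $(U,V)$, observe $V(0)\not\equiv0$ from $\|v_i(0)\|_{L^\infty(-1,1)}=\bar m$, and derive a contradiction with backward uniqueness.

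For the second claim, however, your forward-evolution argument has a genuine gap. You assert that ``$\gamma^V$ stays of order one'' and that therefore some $s'>s_*$ exists with $(\alpha^V)^2+(\beta^V)^2+\|w^V\|_{H^1_\rho}^2<\epsilon_1(\gamma^V)^2$, but neither step is justified. From Part~1 you only know $\|V(s_*)\|_\rho\geq c_0$; this mass could lie entirely in $w^V$, with $\gamma^V(s_*)$ arbitrarily small or zero. Even when $\gamma^V(s_*)\neq0$, the ratio $\|w^V(s_*)\|_{H^1_\rho}/|\gamma^V(s_*)|$ is not bounded a~priori, and a rough computation shows the time needed for $w^V$ to decay below $\sqrt{\epsilon_1}\,|\gamma^V|$ can exceed the time before the unstable modes $\alpha^V,\beta^V$ (fed by the $O(\bar\epsilon)$ coupling and growing like $e^s$) overtake $\sqrt{\epsilon_1}\,|\gamma^V|$. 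Since $\bar\epsilon$ is fixed in advance in \eqref{5Feq} and cannot be readjusted to the unknown ratio, the window you need may be empty. You yourself flag this as ``the main difficulty'' without resolving it.

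The paper avoids this entirely by a static argument at time $d_*$: once $(V(d_*),\phi_0)_\rho=(V(d_*),\phi_1)_\rho=0$ and $V(d_*)\not\equiv0$, a classical Sturm--Liouville oscillation result (any nonzero $L^2_\rho$ function orthogonal to the first $k$ Hermite eigenfunctions has at least $k$ sign changes; the paper cites Kotani--Matano) forces $V(d_*)$ to have at least two zeros, contradicting the one-zero bound you already established. No dynamics are needed.
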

\begin{proof}
First we assume
\[
 \liminf_{i\to\infty}\inf_{0<s<d}\|v_i(s)\|_\rho=0.
\]
Then
there exists $\{d_i\}_{i\in\N}\subset(0,d)$ such that
$\|v_i(d_i)\|_\rho\to0$ as $s\to\infty$.
By taking a subsequence,
we get $d_i\to d_*\in(0,d]$ and $(u_i,v_i)\to(U,V)$ as $i\to\infty$.
Then by definition of $s_i^-$ and $d_i$,
it follows that $V(0)\not\equiv0$ and $V(d_*)\equiv0$.
However
since $V$ satisfies $V_s={\cal A}v+(1-2U)V$,
$V(d_*)\equiv0$ contradicts the backward uniqueness for parabolic equations,
which proves the first statement.
To prove the second statement,
we repeat the same argument above.
Assume that there exists $\{d_i\}_{i\in\N}\in(0,d)$ such that
\begin{equation}\label{5GHeq}
 \liminf_{i\to\infty}(|\alpha_i(d_i)|+|\beta_i(d_i)|)=0.
\end{equation}
From the first statement of this lemma and Lemma \ref{5BLem},
we see that $|\Gamma(s_i)|<K$ for some $K>0$.
By taking a subsequence,
we get $d_i\to d_*$, $(u_i,v_i)\to(U,V)$ and $\Gamma(s_i)\to\Gamma_*\in(-K,K)$.
Then from definition of $\Gamma(s)$,
we see that
\[
 V(y,0)\leq0\hthree\text{for } y<\Gamma_*,
\hfive\hfive
 V(y,0)\geq0\hthree\text{for } y>\Gamma_*.
\]
Since $V\not\equiv0$ on $\R\times(0,\infty)$,
the number of zeros of $V(s)$ is decreasing in $s>0$.
Therefore the number of zeros of $V(d_*)$ is one or zero.
On the other hand,
we see from \eqref{5GHeq} that
$(V(d_*),\phi_0)_\rho=0$, $(V(d_*),\phi_1)_\rho=0$.
Therefore
from Corollary 6.17 \cite{Kotani},
we find that the number of $V(d_*)$ has more than one zeros,
which is contradiction.
The proof is completed.
\end{proof}

\subsubsection{Dynamics of $v_i(s)$ on $L_\rho^2(\R)$}\label{SubsubD}
In the following argument,
we always assume $s\in(0,\Delta_i)$.
Therefore
it follows from definition of $\bar{m}$ that
\[
 \sup_{-\bar{R}<y<\bar{R}}(|u_i(y,s)-1|+|\pa_yu_i(y,s)|)<\bar{\delta}
\hfive\text{for } s\in(0,\Delta_i).
\]
Then $v_i$ satisfies
\[
 \pa_sv_i = \pa_{yy}v_i-\frac{y}{2}\pa_yv_i+v_i+2(u_i-1)v_i.
\]
Multiplying equation by $\phi_k$ ($k=0,1,2$),
we get
\begin{equation}\label{5Heq}
 \dot{\alpha_i} = \alpha_i + 2h_{0i},
 \hfive
 \dot{\beta_i} = \frac{1}{2}\beta_i + 2h_{1i},
 \hfive
 \dot{\gamma_i} = 2h_{2i},
\end{equation}
where $h_{ki}$ ($k=0,1,2$) is given by
\[
 h_{ki} = \int_{-\infty}^\infty (u_i-1)v_i\phi_k\rho dy.
\]
Furthermore
since $w_i$ satisfies
\[
 \pa_sw_i = {\cal A}w_i+w_i+2(u_i-1)w_i+2(u_i-1)(\alpha_i\phi_0+\beta_i\phi_1+\gamma_i\phi_2)
 -2\sum_{k=0}^2h_{ki}\phi_k,
\]
we get
\begin{equation}\label{5Ieq}
 \frac{1}{2}\pa_s\|w_i\|_\rho^2
=
 -\|\pa_yw_i\|_\rho^2+\|w_i\|_\rho^2
+
 2\int_{-\infty}^\infty(u_i-1)w_i^2\rho dy
+
 2H_i,
\end{equation}
where $H_i$ is given by
\[
 H_i
=
 \int_{-\infty}^\infty
 (u_i-1)(\alpha_i\phi_0+\beta_i\phi_1+\gamma_i\phi_2)w_i\rho dy
 -
 \sum_{k=0}^2\int_{-\infty}^\infty h_{ki}\phi_kw_i\rho dy.
\]
By choice of $\bar{R}$ and $\bar{\delta}$,
we see that
\[
\begin{array}{c}
\dis
 \int_{-\infty}^\infty|u_i-1|w_i^2\rho dy
<
 \frac{\bar{\epsilon}}{8}\|w_i\|_{H_\rho^1(\R)}^2,
\hfive
 |h_{ki}|
< \left( \int_{-\infty}^\infty (u_i-1)^2\phi_k^2\rho dy\right)^{1/2}\|v_i\|_\rho
< \frac{\bar{\epsilon}}{24}\|v_i\|_\rho,
\\[4mm] \dis
\begin{array}{lll}
 |H_i|
\hspace{-2mm}&<&\hspace{-2mm} \dis
 \left(
 \int_{-\infty}^\infty
 (u_i-1)^2(|\phi_0|+|\phi_1|+|\phi_2|)^2\rho dy \right)^{1/2}\|v_i\|_\rho\|w_i\|_{\rho}
 +
 \|w_i\|_\rho\sum_{k=0}^2|h_{ki}|
\\ \dis
\hspace{-2mm}&<&\hspace{-2mm} \dis
 \frac{\bar{\epsilon}}{24}
 \|v_i\|_\rho\|w_i\|_{\rho}
 +
 \frac{\bar{\epsilon}}{8}\|v_i\|_\rho\|w_i\|_\rho
=
 \frac{\bar{\epsilon}}{6}\|v_i\|_\rho\|w_i\|_\rho.
\end{array}
\end{array}
\]
Applying these estimates in \eqref{5Heq} and \eqref{5Ieq},
we get
\begin{equation}\label{5Jeq}
 \begin{cases}
 \dis
 \pa_s\left( \alpha_i^2+\beta_i^2 \right) >
 \frac{1}{2}\left( \alpha_i^2+\beta_i^2 \right)-
 \bar{\epsilon}^2\left( \gamma_i^2+\|w_i\|_\rho^2 \right),
 \\[2mm] \dis
 \left| \pa_s\gamma_i^2 \right| <
 \frac{\bar{\epsilon}}{2}
 \left( (\alpha_i^2+\beta_i^2)+\gamma_i^2+\|w_i\|_\rho^2 \right),
 \\[1mm] \dis
 \pa_s\|w_i\|_\rho^2 <
 -\frac{1}{2}\|w_i\|_\rho^2+\bar{\epsilon}^2\left( (\alpha_i^2+\beta_i^2)+\gamma_i^2 \right).
 \end{cases}
\end{equation}
Next we provide estimates for $\pa_yv_i$.
Let $z_i=\pa_yv_i$.
Then $z_i$ satisfies
\[
 \pa_sz_i = {\cal A}z_i+\frac{z_i}{2}+2(u_i-1)z_i+2(\pa_yu_i)v_i.
\]
Since $z_i=\mu_i\phi_0+\nu_i\phi_1+q_i$,
$\mu_i$ and $\nu_i$ satisfy
\[
 \dot{\mu}_i = \frac{1}{2}\mu_i + 2\tilde{h}_{0i} - 2\hat{h}_{0i},
\hfive
 \dot{\nu}_i = 2\tilde{h}_{1i} - 2\hat{h}_{1i},
\]
where $\tilde{h}_{ki}$ and $\hat{h}_{ki}$ ($k=0,1$) are given by
\[
 \tilde{h}_{ki} = \int_{-\infty}^\infty (1-u_i)z_i\phi_k\rho dy,
\hfive
 \hat{h}_{ki} = \int_{-\infty}^\infty (\pa_yu_i)v_i\phi_k\rho dy.
\]
Furthermore
$q_i$ satisfies
\[
 \pa_sq_i =
 {\cal A}q_i+\frac{1}{2}q_i+2(u_i-1)q_i-2(\pa_yu_i)v_i+2(u_i-1)(\mu_i\phi_0+\nu_i\phi_1)-
 2(\tilde{h}_{0i}-\hat{h}_{0i})\phi_0+2(\tilde{h}_{1i}-\hat{h}_{1i})\phi_1.
\]
By the same calculation as $v_i$,
we obtain
\begin{equation}\label{5Keq}
 \begin{cases}
 \dis
 \pa_s \mu_i^2 >
 \frac{\mu_i^2}{2}-\bar{\epsilon}^2\left( \nu_i^2+\|q_i\|_\rho^2+\|v_i\|_\rho^2 \right),
 \\[3mm] \dis
 \left| \pa_s\nu_i^2 \right| <
 \frac{\bar{\epsilon}}{2}
 \left( \nu_i^2+\mu_i^2+\|q_i\|_\rho^2+\|v_i\|_\rho^2 \right),
 \\[1mm] \dis
 \pa_s\|w_i\|_\rho^2 <
 -\frac{1}{2}\|w_i\|_\rho^2+
 \bar{\epsilon}^2\left( \mu_i^2+\nu_i^2+\|v_i\|_\rho^2 \right).
 \end{cases}
\end{equation}
We here put
\begin{equation}\label{5KLeq}
 X_i = \alpha_i^2+\beta_i^2+\gamma_i^2,
\hfive
 Y_i = \mu_i^2+\nu_i^2,
\hfive
 Z_i = \|w_i\|_\rho^2+\|q_i\|_\rho^2.
\end{equation}
Since $\bar{\epsilon}<1/2$,
combining \eqref{5Jeq} and \eqref{5Keq},
we obtain
\begin{equation}\label{5Leq}
 \begin{cases}
 \dis
 \dot{X}_i > \frac{1}{4}X_i-\bar{\epsilon}(Y_i+Z_i),
 \\[1mm] \dis
 |\dot{Y}_i| < \bar{\epsilon}(X_i+Y_i+Z_i),
 \\[1mm] \dis
 \dot{Z}_i < -\frac{1}{4}Z_i+\bar{\epsilon}(X_i+Y_i).
 \end{cases}
\end{equation}
Let $\bar{\eta}>0$ be given in \eqref{5Feq}.
We define $\kappa_i$ by
\[
 \kappa_i = \bar{\eta}X_i-Y_i-Z_i.
\]
We investigate the behavior of $\kappa_i$.
\begin{align*}
 \kappa_i'
&>
 \frac{\bar{\eta}}{4}X_i-\bar{\eta}\bar{\epsilon}(Y_i+Z_i)-
 \bar{\epsilon}(X_i+Y_i+Z_i)+\frac{1}{4}Z_i-\bar{\epsilon}(X_i+Y_i)
\\
&=
 \left( \frac{\bar{\eta}}{4}-2\bar{\epsilon} \right)X_i-(2+\bar{\eta})\bar{\epsilon} Y_i
+
 \left( \frac{1}{4}-(1+\bar{\eta})\bar{\epsilon} \right)Z_i.
\end{align*}
Since $\kappa_i\geq0$ is equivalent to $Y_i+Z_i\leq\bar{\eta} X_i$,
it holds that
\begin{align*}
 \kappa_i'
&>
 \left( \frac{\bar{\eta}}{4}-2\bar{\epsilon}-(2+\bar{\eta})\bar{\epsilon}\bar{\eta} \right)X_i
+
 \left( \frac{1}{4}-(1+\bar{\eta})\bar{\epsilon} \right)Z_i
\\
&=
 \left(
 \left( \frac{1}{4}-2\bar{\epsilon} \right)\bar{\eta}-\left( 2+\bar{\eta}^2 \right)\bar{\epsilon}
 \right)X_i
+
 \left( \frac{1}{4}-(1+\bar{\eta})\bar{\epsilon} \right)Z_i
\hfive\text{if } \kappa_i>0.
\end{align*}
Therefore
from \eqref{5Feq},
we conclude
\[
 \kappa_i'>0
\hfive\text{if } \kappa_i\geq0.
\]
Since $Y_i=\gamma_i^2+\nu_i^2 = 2\gamma_i^2$ and
$Z_i=\|w_i\|_\rho^2+\|z_i\|_\rho^2=\|w_i\|_{H_\rho^1(\R)}^2$ (see Lemma 6.2 \cite{Filippas}),
if $\kappa_i<0$ ($\Leftrightarrow\bar{\eta}X_i<Y_i+Z_i$) and $Z_i < \bar{\zeta}Y_i$,
it holds that
\begin{align*}
 \alpha_i^2+\beta_i^2+\|w_i\|_{H_\rho^1}^2
<
 X_i+Z_i
<
 \left( \frac{1+\bar{\zeta}}{\bar{\eta}}+\bar{\zeta} \right)Y_i
=
 2\left( \frac{1+\bar{\zeta}}{\bar{\eta}}+\bar{\zeta} \right)\gamma_i^2
<
 \epsilon_1\gamma_1^2,
\end{align*}
where we use \eqref{5Feq} in the last inequality.
Therefore
by definition of $\epsilon_1$,
$v_i$ has more than one zeros if $\kappa_i<0$ and $Z_i<\bar{\zeta}Y_i$.
Summarizing the above estimates,
we obtain the following lemma.

\begin{lem}\label{5FLem}
If $\kappa_i(s')\geq0$ for some $s'\in(0,\Delta_i)$,
then it holds that $\kappa_i(s)>0$ for $s\in(s',\Delta_i)$.
Furthermore
if $\kappa_i(s)<0$ for some $s\in(0,\Delta_i)$,
then it holds that $\bar{\zeta}Y_i(s)<Z_i(s)$.
\end{lem}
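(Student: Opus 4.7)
The plan is to recast the calculations from the paragraph immediately preceding the lemma into a clean two-part argument; both parts use the system of differential inequalities \eqref{5Leq} together with the eigenmode constants \eqref{5Feq} in an essential way, and both are really variations on the already-computed lower bound for $\kappa_i'$.

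For the forward invariance claim, my approach is a standard continuation argument. Suppose toward contradiction that $\kappa_i(s')\geq 0$ but $\kappa_i$ fails to be strictly positive throughout $(s',\Delta_i)$. By continuity there is a first crossing time $s_{\ast}\in(s',\Delta_i)$ with $\kappa_i(s_{\ast})=0$ and $\kappa_i'(s_{\ast})\leq 0$. Substituting \eqref{5Leq} into the definition $\kappa_i=\bar\eta X_i-Y_i-Z_i$ and using $Y_i+Z_i=\bar\eta X_i$ at the crossing time, I would derive
\[
\kappa_i'(s_{\ast}) > \Bigl[\bigl(\tfrac14-2\bar\epsilon\bigr)\bar\eta-(2+\bar\eta^{\,2})\bar\epsilon\Bigr]X_i(s_{\ast})+\bigl(\tfrac14-(1+\bar\eta)\bar\epsilon\bigr)Z_i(s_{\ast}),
\]
exactly as displayed just above the lemma. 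The two bracketed coefficients are strictly positive by \eqref{5Feq}, and $X_i(s_{\ast})>0$ by the second assertion of Lemma \ref{5ELem} (which provides a uniform lower bound on $\alpha_i^2+\beta_i^2$). Hence $\kappa_i'(s_{\ast})>0$, contradicting $\kappa_i'(s_{\ast})\leq 0$.

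For the dichotomy in the case $\kappa_i(s)<0$, my plan is again to argue by contradiction, assuming additionally $Z_i(s)\leq \bar\zeta Y_i(s)$. Then $\bar\eta X_i<Y_i+Z_i$ combined with $Z_i\leq \bar\zeta Y_i$ yields
\[
X_i+Z_i < \frac{1+\bar\zeta}{\bar\eta}\,Y_i+\bar\zeta Y_i = 2\Bigl(\frac{1+\bar\zeta}{\bar\eta}+\bar\zeta\Bigr)\gamma_i^{\,2} < \epsilon_1\gamma_i^{\,2},
\]
using the identity $Y_i=2\gamma_i^{\,2}$ (from Lemma 6.2 of \cite{Filippas}, as noted before the lemma) together with the first inequality of \eqref{5Feq}. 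Consequently $\alpha_i^{\,2}+\beta_i^{\,2}+\|w_i\|_{H_\rho^1(\R)}^2<\epsilon_1\gamma_i^{\,2}$, which by the defining property of $\epsilon_1$ forces $v_i(\cdot,s)$ to possess at least two zeros in $(-2,2)$. Since $v_i(\cdot,s)=v(\cdot,s_i^-+s)$ and the hypothesis of Theorem \ref{thm3} forces $b(t)$, hence $v(\cdot,\tau)$ for all $\tau\in(s_T,\infty)$, to have exactly one zero, this is the required contradiction, yielding $\bar\zeta Y_i(s)<Z_i(s)$.

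The only delicate point I foresee is ensuring strictness of the lower bound for $\kappa_i'|_{\kappa_i=0}$; this is where Lemma \ref{5ELem} is indispensable, since without a quantitative lower bound on $X_i$ at the crossing time the inequality would only be $\kappa_i'\geq 0$, and $\kappa_i$ could in principle remain pinned at zero. Everything else is algebraic bookkeeping calibrated by \eqref{5Feq}.
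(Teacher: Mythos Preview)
Your approach is the paper's approach: the lemma in the paper is stated \emph{after} the supporting computations, with the sentence ``Summarizing the above estimates, we obtain the following lemma,'' so your two contradiction arguments simply repackage those computations. The second part in particular reproduces the paper verbatim.

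There is, however, one genuine misstep in your justification of strictness in Part 1. You invoke Lemma~\ref{5ELem} to get $X_i(s_\ast)>0$, calling it ``indispensable.'' But Lemma~\ref{5ELem} asserts only that
\[
\liminf_{i\to\infty}\inf_{0<s<d}\bigl(|\alpha_i(s)|+|\beta_i(s)|\bigr)>0
\]
for each \emph{fixed} $d>0$; it says nothing about an individual index $i$ at an arbitrary time $s_\ast\in(0,\Delta_i)$, and $\Delta_i\to\infty$. So the lemma does not apply here. Fortunately the fact you need follows for a much simpler reason and does not require Lemma~\ref{5ELem} at all: at the crossing time $\kappa_i(s_\ast)=0$ one has $\bar\eta X_i(s_\ast)=Y_i(s_\ast)+Z_i(s_\ast)$, so $X_i(s_\ast)=0$ would force $Y_i(s_\ast)=Z_i(s_\ast)=0$ and hence $\|v_i(\cdot,s_\ast)\|_\rho^2\leq X_i(s_\ast)+Z_i(s_\ast)=0$. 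That would mean $v(\cdot,s_i^-+s_\ast)\equiv0$, contradicting the standing hypothesis that $b(\cdot,t)$ has exactly one zero (so $b\not\equiv0$). Thus $X_i(s_\ast)>0$, and the strict inequality $\kappa_i'(s_\ast)>0$ follows as you wrote. This is what the paper is implicitly using when it concludes ``$\kappa_i'>0$ if $\kappa_i\geq0$.''

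One minor stylistic point: your ``first crossing time'' formulation is slightly awkward in the boundary case $\kappa_i(s')=0$, since there is nothing to cross back to. The clean fix is to first observe (by the same computation) that $\kappa_i'(s')>0$, so $\kappa_i>0$ immediately after $s'$, and then run the first-return argument on $(s',\Delta_i)$.
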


\begin{lem}\label{5GLem}
Let $\Delta_i^-=\{s\in(0,\Delta_i); \kappa_i(s')<0$ {\rm for} $s'\in(0,s)\}$.
Then it holds that $\dis\lim_{i\to\infty}\Delta_i^-=\infty$ and
$\bar{\zeta}Y_i(s)<Z_i(s)$ for $s\in(0,\Delta_i^-)$. 
\end{lem}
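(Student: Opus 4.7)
My plan is to dispose of the second conclusion first: for $s\in(0,\Delta_i^-)$, the definition of $\Delta_i^-$ gives $\kappa_i(s)<0$, and the second half of Lemma \ref{5FLem} then yields $\bar\zeta Y_i(s)<Z_i(s)$ at once. The substantive content of the lemma is therefore the claim $\sup\Delta_i^-\to\infty$.

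I would argue this by contradiction. Suppose, after passing to a subsequence, that $\sup\Delta_i^-\leq C<\infty$ for every $i$, and set $\tau_i=\sup\Delta_i^-\in[0,C]$. By continuity of $\kappa_i$ and the maximality of $\tau_i$, there exist times arbitrarily close to (and not less than) $\tau_i$ at which $\kappa_i\geq 0$; Lemma \ref{5FLem} then forces $\kappa_i(s)>0$ for all $s\in(\tau_i,\Delta_i)$. Consequently $Y_i(s)+Z_i(s)\leq\bar\eta X_i(s)$ on $(\tau_i,\Delta_i)$, and the first line of \eqref{5Leq} combined with $\bar\epsilon\bar\eta<1/8$ from \eqref{5Feq} gives the differential inequality
\[
 \dot X_i(s) > \left(\tfrac14-\bar\epsilon\bar\eta\right)X_i(s) > \tfrac18 X_i(s),
 \qquad s\in(\tau_i,\Delta_i).
\]

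To convert this into an unbounded lower bound I would fix any $s_0>C$ and invoke Lemma \ref{5ELem} with $d=s_0+1$, obtaining $X_i(s_0)\geq\alpha_i^2(s_0)+\beta_i^2(s_0)\geq c_0>0$ for every sufficiently large $i$. Integrating the inequality above from $s_0$ yields $X_i(s)\geq c_0\,e^{(s-s_0)/8}$ for $s\in(s_0,\Delta_i)$. Since $\Delta_i\to\infty$, evaluating at $s=\Delta_i-1$ sends $X_i$ to infinity along the subsequence. On the other hand, assumption \eqref{eq1C} yields a uniform sup-norm bound on $v$ and hence on $\|v_i(s)\|_\rho$, so $X_i(s)\leq\|v_i(s)\|_\rho^2$ is uniformly bounded above. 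These two conclusions contradict each other, forcing $\sup\Delta_i^-\to\infty$.

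The main point requiring care is the identification of $\tau_i$ and the passage to the regime where $\kappa_i>0$. In the borderline case $\tau_i=0$ (i.e.\ $\Delta_i^-=\emptyset$) one has to observe that there is a sequence $s_n'\to 0^+$ with $\kappa_i(s_n')\geq 0$ and apply Lemma \ref{5FLem} at each such $s_n'$, which effectively gives $\kappa_i>0$ on all of $(0,\Delta_i)$; once that is in hand the exponential-growth-versus-uniform-bound mechanism goes through unchanged.
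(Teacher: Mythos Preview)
Your proof is correct and follows essentially the same approach as the paper: the second claim is immediate from Lemma~\ref{5FLem}, and for the first you argue by contradiction, use Lemma~\ref{5FLem} to get $\kappa_i>0$ beyond $\tau_i=\sup\Delta_i^-$, derive $\dot X_i>\tfrac18 X_i$ from \eqref{5Leq} and $\bar\epsilon\bar\eta<\tfrac18$, and contradict the uniform $L_\rho^2$ bound coming from \eqref{eq1C}. The only cosmetic difference is where the seed lower bound on $X_i$ is obtained---the paper applies Lemma~\ref{5ELem} on $(0,\Delta_j^-)$ to get $X_j(\Delta_j^-)>\theta$, while you fix a reference time $s_0>C$ and apply Lemma~\ref{5ELem} with $d=s_0+1$; both choices work equally well.
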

\begin{proof}
Since the second statement is trivial from Lemma \ref{5FLem},
it is enough to  show the first statement.
We prove by contradiction.
Assume that there exists a subsequence $\{j\}_{j\in\Lambda}\subset\{i\}_{i\in\N}$ such that
$\{\Delta_{j}^{-}\}_{j\in\Lambda}$ is bounded.
Then from Lemma \ref{5ELem},
there exists $\theta>0$ such that
\begin{equation}\label{5Neq}
 \inf_{0<s<\Delta_j^-}(|\alpha_j(s)|+|\beta_j(s)|) > \theta
\hfive\text{for } j\in\Lambda.
\end{equation}
From definition of $\Delta_i^-$ and Lemma \ref{5FLem},
we see that $\kappa_i(s)>0$ for $s\in(\Delta_i^-,\Delta_i)$.
Therefore
since $X_i$, $Y_i$ and $Z_i$ satisfy \eqref{5Leq} for $s\in(0,\Delta_i)$,
we get from \eqref{5Feq} that
\begin{align*}
 \dot{X}_i & > \frac{1}{4}X_i-\bar{\epsilon}\bar{\eta}X_i > \frac{1}{8}X_i
\hfive\text{for } s\in(\Delta_i^-,\Delta_i).
\end{align*}
Since we note from \eqref{5Neq} that $X_j(\Delta_j^-)>\theta$ for $j\in\Lambda$,
we obtain
\[
 X_j(s) > \theta e^{(s-\Delta_j^-)/8}
\hfive\text{for} s\in(\Delta_j^-,\Delta_j).
\]
However
since $\Delta_j\to\infty$ as $j\to\infty$ and $\Delta_j^-$ is bounded,
$X_j(s)$ becomes arbitrary large for large $j\in\Lambda$, 
which contradicts a boundedness of $X_i(s)$.
\end{proof}

\begin{proof}[{\bf Proof of Proposition \ref{5BPro}}]\label{ProofPro}
From Lemma \ref{5GLem},
there existss a subsequence $\{(u_,v_i)\}_{i\in\N}$ such that
\begin{equation}\label{5Oeq}
 \bar{\eta}X_i < Y_i+Z_i,
 \hthree
 \bar{\zeta}Y_i<Z_i
\hfive\text{for } s\in(0,\Delta_i^-),
\hfive\hfive
 \lim_{i\to\infty}\Delta_i^-=\infty.
\end{equation}
Therefore
we get from \eqref{5Leq} that
\[
 \dot{Z}_i <
 \left(
 -\frac{1}{4}+\bar{\epsilon}\left( \frac{1}{\bar{\eta}}\left(1+\frac{1}{\bar{\zeta}}
 \right)
 +\frac{1}{\bar{\zeta}}\right) \right)Z_i
<
 -\frac{1}{8}Z_i
\hfive\text{for } s\in(0,\Delta_i^-),
\]
which implies $Z_i < Z_i(0)e^{-s/8}$.
Combining this estimate and \eqref{5Oeq},
we obtain
\[
 \|v_i(s)\|_\rho < ce^{-s/8}
\hfive\text{for } s\in(0,\Delta_i^-)
\]
for some $c>0$.
As as consequence,
from Lemma \ref{5CLem},
there exists a positive continuous function $F(s)$ on $s>0$ such that
$F(s)\to0$ as $s\to\infty$ and
\[
 \|u_i(s)-1\|_\rho < F(s) \hfive\text{for } s\in(0,\Delta_i^-).
\]
Then by taking a subsequence,
we get $(u_i,v_i)\to(U,V)$ as $i\to\infty$.
From above estimates,
we see that
\[
 \lim_{s\to\infty}\|U(s)-1\|_\rho=0,
\hfive\hfive
 \|V(s)\|_\rho=O(e^{-s/8}).
\]
Then
Lemma \ref{5ALem} implies that
$\|U(s)-1\|_\rho=O(e^{-\gamma s})$ for some $\gamma>0$.
Therefore
we get form Lemma \ref{5HLem} that
\begin{equation}\label{5Peq}
 |U(y,s)-1|+|V(y,s)|<ce^{-\gamma s/2} \hfive\text{for } |y|<e^{\theta s}
\end{equation}
for some $\theta>0$ and $c>0$.
Since $V$ satisfies \eqref{eq2B},
it holds that
\[
 \|V_s-({\cal A}-1)V\|_\rho < 2\|(U-1)V\|_\rho.
\]
Since $U$ is uniformly bounded,
by using \eqref{5FFGeq} and \eqref{5Peq},
we get
\begin{align*}
 \|(U-1)V\|_\rho^2
&=
 \int_{|y|<e^{\theta s}}(U-1)^2V^2\rho dy
+
 \int_{|y|>e^{\kappa s}}(U-1)^2V^2\rho dy
\\
&<
 ce^{-2\gamma_1 s}\int_{\R}V^2\rho dy
+
 ce^{-2\theta s}\int_{|y|>e^{\kappa s}}|y|^2V^2\rho dy
<
 c\left( e^{-2\gamma_1 s}+e^{-2\theta s} \right)\|V\|_{H_\rho^1(\R)}^2.
\end{align*}
Therefore
we obtain
\[
 \|V_s-({\cal A}-1)V\|_\rho < ce^{-\mu s}\|V\|_{H_\rho^1(\R)}^2
\]
for some $\mu>0$.
Repeating the argument as in the proof of Proposition \ref{5APro},
which derives contradiction.
Therefore the assumption \eqref{5FGeq} is false.
\end{proof}

\begin{lem}\label{5ALem}
If $(u_i,v_i)$ converges to some function $(U,V)$
in $L_{\mathrm{loc}}^\infty(\R\times(0,\infty))$ satisfying
$\dis\lim_{s\to\infty}\|U(s)-1\|_\rho=0$ and $\|V(s)\|_\rho$ decays exponentially,
then $\|U(s)-1\|_\rho$ decays exponentially.
\end{lem}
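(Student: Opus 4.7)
The plan is to set $W := U - 1$, so that \eqref{eq2B} gives
\[
 W_s = {\cal L}W + W^2 - V^2, \qquad {\cal L} := \pa_y^2 - \tfrac{y}{2}\pa_y + 1,
\]
and to analyse $W$ via the Hermite eigenbasis $\{\phi_k\}_{k\ge 0}$ of ${\cal L}$ (eigenvalues $\lambda_k = 1 - k/2$) through the decomposition
\[
 W(y,s) = \alpha(s)\phi_0(y) + \beta(s)\phi_1(y) + \gamma(s)\phi_2(y) + w(y,s),
\]
where $w$ lies in the closure of $\mathrm{span}\{\phi_k\}_{k\ge 3}$ (on which ${\cal L}$ has spectrum at most $-1/2$). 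Uniform boundedness of $(u_i,v_i)$ inherited from \eqref{eq1C} forces $W \in L^\infty$, and the hypothesis $\|W(s)\|_\rho \to 0$ forces $\alpha(s), \beta(s), \gamma(s), \|w(s)\|_\rho \to 0$. A standard parabolic smoothing estimate for $W$ also yields $\|W(s)\|_{H^1_\rho} \le C\|W(s)\|_\rho$ for $s$ large.

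For the stable projection $w$, multiplying its equation by $w\rho$ and using $\langle {\cal L}w, w\rangle_\rho \le -\tfrac{1}{2}\|w\|_\rho^2$ together with the Cauchy--Schwarz bound $\|P_w(W^2 - V^2)\|_\rho \le C\|W\|_\rho\|W\|_{H^1_\rho} + C\|V\|_\rho^2$ (relying on \eqref{5FFGeq}) yields $\|w(s)\|_\rho \le C e^{-s/4}$ for $s$ large. For the unstable modes, since $\alpha(s), \beta(s) \to 0$, backward integration of $\dot\alpha = \alpha + N_0$ and $\dot\beta = \tfrac12\beta + N_1$, with $N_k = \langle W^2 - V^2, \phi_k\rangle_\rho$, gives
\[
 |\alpha(s)| + |\beta(s)| \le C \sup_{\tau \ge s}\|W(\tau)\|_\rho^2 + C e^{-2\gamma_0 s},
\]
where $\|V(s)\|_\rho \le C e^{-\gamma_0 s}$ is the assumed exponential decay.

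The heart of the matter is the center mode $\gamma$, whose ODE $\dot\gamma = N_2$ has zero linear rate. Cauchy--Schwarz and \eqref{5FFGeq} give $|N_2| \le C\|W\|_\rho\|W\|_{H^1_\rho} + Ce^{-2\gamma_0 s}$, and the parabolic bootstrap $\|W\|_{H^1_\rho} \le C\|W\|_\rho$ promotes this to the genuinely quadratic estimate $|\dot\gamma(s)| \le C\|W(s)\|_\rho^2 + Ce^{-2\gamma_0 s}$. Integrating from $s$ to $\infty$ (using $\gamma \to 0$) then gives $|\gamma(s)| \le C\int_s^\infty \|W(\tau)\|_\rho^2\, d\tau + Ce^{-2\gamma_0 s}$. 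Setting $\Phi(s) := \|W(s)\|_\rho^2$, the four estimates combine into
\[
 \Phi(s) \le C e^{-s/2} + C e^{-2\gamma_0 s} + o(1)\sup_{\tau \ge s}\Phi(\tau) + C\left(\int_s^\infty \Phi(\tau)\, d\tau\right)^{\!2}
\]
for $s$ large; absorbing the $\sup$ term and iterating via a Bihari-type argument yields $\Phi(s) \le Ce^{-\mu s}$ for some $\mu > 0$, and hence $\|U(s) - 1\|_\rho \le C e^{-\mu s/2}$. The main obstacle is precisely the center-mode analysis: without the quadratic bound on $N_2$ coming from the $H^1_\rho$-bootstrap combined with \eqref{5FFGeq}, one would only obtain polynomial ($1/s$) decay of $\gamma$, and hence of $\|W\|_\rho$.
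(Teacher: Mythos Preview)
Your functional-analytic route is correct up to the center-mode step, but the claim that the Bihari-type inequality
\[
 \Phi(s) \le C e^{-s/2} + C e^{-2\gamma_0 s} + o(1)\sup_{\tau \ge s}\Phi(\tau) + C\Bigl(\int_s^\infty \Phi(\tau)\,d\tau\Bigr)^{2}
\]
forces exponential decay of $\Phi$ is false. Plug in $\Phi(s)=c/s^{2}$: then $\sup_{\tau\ge s}\Phi(\tau)=c/s^{2}$ and $\int_s^\infty\Phi=c/s$, so the right-hand side is of order $Cc^{2}/s^{2}+o(1)c/s^{2}$, and the inequality holds once $c\ge 1/C$. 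Thus the polynomial scenario $\gamma(s)\sim 1/s$ (equivalently $\|W(s)\|_\rho\sim 1/s$) is perfectly consistent with your inequality. This is not an artifact of loose estimates: the effective equation for the neutral mode is $\dot\gamma\approx a\gamma^{2}$ with $a\ne 0$, whose generic decaying solutions are $\gamma(s)\sim(-as)^{-1}$. The quadratic bound $|N_2|\le C\|W\|_\rho^{2}$ is exactly what produces the Filippas--Kohn dichotomy (exponential vs.\ $1/s$); it does not by itself select the exponential branch.

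The paper's proof confronts this dichotomy explicitly. It invokes the Filippas--Kohn classification to obtain either (I) $\|U(s)-1\|_\rho=O(e^{-\gamma_1 s})$ or (II) $U(s)-1=\Lambda(1+o(1))s^{-1}\phi_2$, and then \emph{excludes} (II) by a geometric argument: via the Herrero--Vel\'azquez profile analysis, case~(II) would make the origin an \emph{isolated} blow-up point of the limit $(A,B)$, but Lemma~\ref{3BCLem} (which uses that $(u_i,v_i)$ are rescalings around points lying in a continuum of blow-up points of the original solution) forbids isolated blow-up for $(A,B)$. Your argument never invokes this structural fact about $(u_i,v_i)$, and without it there is no mechanism to rule out the $1/s$ branch.
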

\begin{proof}
Put $\lambda_i=e^{-s_i}$, $t_i=T-\lambda_i$ and
\[
 a_i(x,\tau) = \lambda_ia(\sqrt{\lambda_i}x,t_i+\lambda_i\tau),
\hfive
 b_i(x,\tau) = \lambda_ib(\sqrt{\lambda_i}x,t_i+\lambda_i\tau).
\]
Then we see that
\[
 u_i(y,s)=(1-\tau)a_i(x,\tau),
\hfive
 v_i(y,s)=(1-\tau)b_i(x,\tau)
\]
with $x=e^{-s/2}y$ and $1-\tau=e^{-s}$.
Therefore
since $(a_i(0),b_i(0))=(u_i(0),v_i(0))$,
we get $(a_i,b_i)\to(A,B)$ and
\[
 U(y,s) = (1-\tau)A(x,\tau),
\hfive
 V(y,s) = (1-\tau)B(x,\tau).
\]
Since $\|V(s)\|_\rho=O(e^{-\gamma s})$,
we see from Lemma \ref{5HLem} that $|V(y,s)|=O(e^{-\gamma_1s})$ for $|y|<e^{\theta s}$.
Therefore
applying the same argument as \cite{Filippas} with a slight modification,
we find that there are two possibilities:
(I) there exists $\gamma_1>0$ such that $\|U(s)-1\|_\rho=O(e^{-\gamma_1s})$
or
(II) there exists $\Lambda\not=0$ such that $U(s)-1=\Lambda(1+o(1))s^{-1}\phi_2$ in $L_\rho^2(\R)$.
Assume that (II) holds.
Since $|V(y,s)|=O(e^{-\gamma_1s})$ for $|y|<e^{\theta s}$,
the argument in the proof of Proposition 2.3 \cite{Herrero**} shows
\[
 \lim_{s\to\infty}\sup_{|y|<l\sqrt{s}}
 \left| U(y,s)-\left( 1+\frac{\bar{c}}{s}y^2 \right)^{-1} \right|
 =0
\hfive\text{for any } l>0
\]
with some $\bar{c}>0$.
Furthermore applying the argument in \cite{Herrero},
we can verify that the origin is an isolated blow-up point of $(A,B)$.
Therefore (II) is excluded from Lemma \ref{3BCLem},
which completes the proof.
\end{proof}

\begin{lem}\label{5HLem}
Let $(U,V)$ be a sbounded solution of \eqref{eq2B} satisfying $\|V(s)\|_\rho=O(e^{-\gamma s})$.
Then there exist $\theta>0$ and $c>0$ such that
\[
 |V(y,s)|<ce^{-\gamma s/2}
\hfive\mathrm{for}\ |y|<e^{\theta s}.
\]
Furthermore
if $\|U(s)-1\|_\rho+\|V(s)\|_\rho=O(e^{-\gamma s})$ .
Then there exist $\theta>0$ and $c>0$ such that
\[
 |U(y,s)-1|+|V(y,s)|<ce^{-\gamma s/2}
\hfive\mathrm{for}\ |y|<e^{\theta s}.
\]
\end{lem}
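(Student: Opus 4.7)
The plan is to combine the Duhamel representation of $V$ against the Ornstein--Uhlenbeck semigroup $e^{\sigma{\cal A}}$ (whose kernel is the explicit Mehler kernel) with the boundedness of $(U,V)$ and the hypothesis $\|V(s)\|_\rho = O(e^{-\gamma s})$. The decisive point is to choose the time step $\sigma$ in terms of $|y_0|$ so as to balance the Gaussian growth of the weighted semigroup kernel against the exponential decay of $\|V\|_\rho$.

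Since $V_s = {\cal A}V - V + 2UV$ and $(2U-1)$ is globally bounded, the Duhamel formula reads
\[
 V(y_0,s) = e^{-\sigma}(e^{\sigma{\cal A}}V(s-\sigma))(y_0) + \int_{s-\sigma}^s e^{-(s-\tau)}(e^{(s-\tau){\cal A}}(2UV(\tau)))(y_0)\, d\tau.
\]
With the Mehler kernel $K_\sigma(y,y') = \frac{1}{\sqrt{4\pi(1-e^{-\sigma})}}\exp(-(y'-e^{-\sigma/2}y)^2/(4(1-e^{-\sigma})))$, Cauchy--Schwarz against $\rho$ gives
\[
 |(e^{\sigma{\cal A}}V(s-\sigma))(y_0)| \leq \|K_\sigma(y_0,\cdot)\rho^{-1/2}\|_{L^2(\R)}\,\|V(s-\sigma)\|_\rho,
\]
and a direct Gaussian integration yields $\|K_\sigma(y_0,\cdot)\rho^{-1/2}\|_{L^2(\R)}^2 = \frac{1}{2\sqrt{\pi(1-e^{-2\sigma})}}\exp(y_0^2 e^{-\sigma}/(2(1+e^{-\sigma})))$. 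This is $O(1)$ precisely when $y_0^2 e^{-\sigma}$ is bounded, so I would choose $\sigma = 2\log|y_0|$. With this choice the free-evolution term is bounded by $|y_0|^{-2}e^{-\gamma(s-2\log|y_0|)} = |y_0|^{2\gamma-2}e^{-\gamma s}$, which for $|y_0|<e^{\theta s}$ is $\leq C e^{-(\gamma-2(\gamma-1)\theta)s}$; this yields the claimed rate $\gamma/2$ after choosing $\theta>0$ small (positive when $\gamma>1$, and trivially when $\gamma\leq 1$ since then $|y_0|^{2\gamma-2}\leq 1$).

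The forcing term is controlled by splitting the time integral at a suitable threshold. On the long-time portion the same weighted-kernel estimate gives a contribution of comparable order. On the short-time portion, where $\|K_{s-\tau}(y_0,\cdot)\rho^{-1/2}\|_{L^2(\R)}$ blows up in $|y_0|$, the weighted estimate is replaced by the uniform bound $|V|\leq M$ combined with the $L^\infty$-contractivity of $e^{u{\cal A}}$ (i.e.\ $e^{u{\cal A}}|V|(y_0)\leq\|V\|_\infty$). A bootstrap using the elementary pointwise bound $|V(y,s)|\leq Ce^{-\gamma s/2}$ on the smaller region $|y|\lesssim\sqrt{s}$ (which follows from parabolic regularity together with $\int_{|y|<R}V^2\,dy\leq e^{R^2/4}\|V\|_\rho^2$) then closes the argument, at the cost of the factor $1/2$ in the final rate.

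For the second statement, the same scheme applies to $U-1$, which satisfies $(U-1)_s = {\cal A}(U-1)+(U-1)+(2U-1)(U-1)-V^2$; the forcing term $V^2$ is controlled via the pointwise bound on $V$ obtained in the first part. The main technical obstacle is the short-time portion of the Duhamel integral for the forcing, where the weighted Mehler-kernel estimate fails for large $|y_0|$; this is exactly the step that forces us to fall back on the uniform bound $|V|\leq M$ and is the source of the loss from $\gamma$ to $\gamma/2$ in the decay rate.
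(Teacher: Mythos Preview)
Your Cauchy--Schwarz estimate of the Mehler semigroup against the weighted norm $\|\cdot\|_\rho$ is exactly the computation the paper carries out, and the free-evolution contribution is handled correctly (the paper takes the time step $\tau-s=\gamma s/(2K)$ rather than your $\sigma=2\log|y_0|$, but the two choices are equivalent for this piece).

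The gap is in the forcing term generated by the potential $2U$. On the portion of the Duhamel integral with $s-\tau$ small, the kernel $K_{s-\tau}(y_0,\cdot)$ is concentrated near $y_0$ itself, so replacing the weighted estimate by the uniform bound $|V|\le M$ there gives a contribution of size $O(1)$, with no decay in $s$ whatsoever; and the threshold at which the weighted estimate becomes usable is $s-\tau\gtrsim 2\log|y_0|$, so the ``short-time'' portion is in fact not short. Your bootstrap from $|y|\lesssim\sqrt{s}$ cannot repair this, because the short-time kernel sees $V(\cdot,\tau)$ near $y_0\sim e^{\theta s}$, far outside that region, so an iteration only recovers the region already known. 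The paper sidesteps the whole issue by replacing Duhamel with a comparison principle: since $|V|$ is a subsolution of $W_\tau={\cal A}W+KW$ with the \emph{constant} $K=2\sup_s\|U(s)\|_{L^\infty}$, one obtains $|V(\tau)|\le e^{K(\tau-s)}\bigl(e^{(\tau-s){\cal A}}|V(s)|\bigr)$ with no forcing term at all, and a single application of your Mehler/Cauchy--Schwarz estimate finishes the job. The factor $e^{K(\tau-s)}$ is the actual source of the loss from $\gamma$ to $\gamma/2$. For $U-1$ the same supersolution trick absorbs the term linear in $U-1$, leaving only the genuinely inhomogeneous forcing $V^2$; this is then controlled by the pointwise bound on $V$ just proved, splitting the spatial integral at $|\xi|=e^{\theta s}$ rather than the time integral.
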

\begin{proof}
We apply the method given in \cite{Herrero}.
Let $K=2\sup_{s>0}\|U(s)\|_{L^\infty(\R)}$.
To construct a comparison function for $V$,
we consider
\[
 W_\tau = {\cal A}W+KW \hthree \tau>s,
\hfive
 W_0=|V(s)|.
\]
Then this solution $W$ is given by
\[
 W(\tau) =
 \frac{e^{K(\tau-s)}}{2\sqrt{\pi}\sqrt{1-e^{-(\tau-s)}}}
 \int_{-\infty}^\infty
 \exp\left( -\frac{(ye^{-(\tau-s)/2}-\xi)^2}{4(1-e^{-(\tau-s)})} \right)
 W_0(\xi)d\xi.
\]
Then
it holds that
\[
 \int_{-\infty}^\infty
 \exp\left( -\frac{(ye^{-(\tau-s)/2}-\xi)^2}{4(1-e^{-(\tau-s)})} \right)
 W_0(\xi)d\xi
<
 \left( 
 \int_{-\infty}^\infty
 \exp\left( -\frac{(ye^{-(\tau-s)/2}-\xi)^2}{2(1-e^{-(\tau-s)})}+\frac{\xi^2}{4} \right)^{1/2}
 \right)
 \|W_0\|_\rho.
\]
Since
\[
 -\frac{(ye^{-(\tau-s)/2}-\xi)^2}{2(1-e^{-(\tau-s)})}+\frac{\xi^2}{4}
=
 -\frac{1+e^{-(\tau-s)}}{4(1-e^{-(\tau-s)})}
 \left( \xi-\frac{2e^{-(\tau-s)/2}}{1+e^{-(\tau-s)}}y \right)^2
+
 \frac{e^{-(\tau-s)}}{4(1-e^{-(\tau-s)})^2}\left(2-e^{-(\tau-s)}\right)y^2,
\]
we obtain
\[
 W(\tau) < c\left( \frac{1+e^{-(\tau-s)}}{2\pi(1-e^{-(\tau-s))})} \right)^{1/4}e^{K(\tau-s)}
 \exp\left( \frac{e^{-(\tau-s)}y^2}{2(1-e^{-(\tau-s)})^2} \right)
 e^{-\gamma s}.
\]
We choose $\tau=(1+\frac{\gamma}{2K})s$.
Since $\tau-s=\frac{\gamma s}{2K}>\log2$ for large $s>0$,
it follows that
\[
 W(\tau) < ce^{-\gamma s/2}\exp\left( 2e^{-\gamma s/2k}y^2 \right) < ce^{-\gamma s/2}
\]
for $|y|<e^{\gamma s/4K}$ and $s\gg1$.
Therefore
the first estimate is proved.
Next we provide estimates for $U-1$.
Let $C=U-1$.
Then it satisfies
\[
 C_s = {\cal A}C+C+C^2-V^2.
\]
By the same way as above,
we consider
\[
 W_\tau={\cal A}W+KW+V^2 \hthree \tau>s,
\hfive |W_0|=|V(s)|,
\]
where $K=1+\sup_{s>0}\|U(s)\|_{L^\infty(\R)}$.
Then
$W$ is given by
\begin{align*}
 W(\tau)
&=
 \frac{e^{K(\tau-s)}}{2\sqrt{\pi}\sqrt{1-e^{-(\tau-s)}}}
 \int_{-\infty}^\infty
 \exp\left( -\frac{(ye^{-(\tau-s)/2}-\xi)^2}{4(1-e^{-(\tau-s)})} \right)
 W_0(\xi)d\xi
\\ & \hfive\hfive
+
 \int_s^\tau
 \frac{e^{K(\tau-\mu)}}{2\sqrt{\pi}\sqrt{1-e^{-(\tau-\mu)}}}d\mu
 \int_{-\infty}^\infty
 \exp\left( -\frac{(ye^{-(\tau-\mu)/2}-\xi)^2}{4(1-e^{-(\tau-\mu)})} \right)
 V(\xi,\mu)^2d\xi.
\end{align*}
By the same way as above,
we choose $\tau=(1+\frac{\gamma}{2K})s$.
Then
it is enough to estimate the second term on the right-hand side.
Since $|V(\xi,s)|<ce^{-\gamma s/2}$ for $|\xi|<e^{\theta s}$ and $s\gg1$,
we get
\[
\begin{array}{l}
\dis
 \int_{-\infty}^\infty
 \exp\left( -\frac{(ye^{-(\tau-\mu)/2}-\xi)^2}{4(1-e^{-(\tau-\mu)})} \right)
 V(\xi,\mu)^2d\xi
<
 \int_{|\xi|<e^{\theta s}}d\xi+\int_{|\xi|>e^{\theta s}}d\xi
\\[4mm] \dis \hfive\hfive
<
 ce^{-\gamma \mu}\sqrt{4\pi(1-e^{-\tau-\mu})}
+
 c\int_{|\xi|>e^{\theta s}}
 \exp\left( -\frac{(ye^{-(\tau-\mu)/2}-\xi)^2}{4(1-e^{-(\tau-\mu)})} \right)
 d\xi.
\end{array}
\]
If $|y|<e^{\theta s}/2$ and $|\xi|>e^{\theta s}$,
it holds that $|ye^{-(\tau-\mu)/2}-\xi|>|\xi|/2$.
Therefore
we get
\[
\begin{array}{l}
\dis
 \int_{|\xi|>e^{\theta s}}
 \exp\left( -\frac{(ye^{-(\tau-\mu)/2}-\xi)^2}{4(1-e^{-(\tau-\mu)})} \right)
 d\xi
<
 \int_{|\xi|>e^{\theta s}}
 \exp\left( -\frac{\xi^2}{16(1-e^{-(\tau-\mu)})} \right)
 d\xi
\\[4mm] \dis \hfive\hfive
<
 e^{-\gamma s}
 \int_{|\xi|>e^{\theta s}}
 |\xi|^{\gamma/\theta}\exp\left( -\frac{\xi^2}{16(1-e^{-(\tau-\mu)})} \right)
<
 c(1-e^{-(\tau-\mu)})^{(\gamma+\theta)/2\theta}e^{-\gamma s}
\end{array}
\]
for $|y|<e^{\theta s}/2$.
As a consequence,
we obtain
\[
\begin{array}{l}
\dis
 \int_s^\tau
 \frac{e^{K(\tau-\mu)}}{2\sqrt{\pi}\sqrt{1-e^{-(\tau-\mu)}}}d\mu
 \int_{-\infty}^\infty
 \exp\left( -\frac{(ye^{-(\tau-\mu)/2}-\xi)^2}{4(1-e^{-(\tau-\mu)})} \right)
 V(\xi,\mu)^2d\xi
\\[4mm] \dis \hfive\hfive
<
 c\int_s^\tau e^{K(\tau-\mu)}e^{-\gamma \mu}d\mu
<
 ce^{K(\tau-s)}\int_s^\tau e^{-\gamma \mu}d\mu
<
 ce^{K(\tau-s)}e^{-\gamma s}
=
 ce^{-\gamma s/2}
\end{array}
\]
for $|y|<e^{\theta s}/2$,
which completes the proof.
\end{proof}

\subsection{Proof of Theorem \ref{thm3}}
The proof of Theorem \ref{thm3} is almost the same as that of Proposition \ref{5BPro}.
\begin{proof}[Proof of Theorem {\rm\ref{thm3}}]
Assume that \eqref{5Beq} holds true.
Then from Proposition \ref{5BPro},
$v(s)$ converges to zero in $L_\rho^2(\R)$ as $s\to\infty$.
Then we see from Lemma \ref{5CLem} that $u(s)\to1$ in $L_\rho^2(\R)$ as $s\to\infty$.
Once $\|v(s)\|_\rho=O(e^{-\gamma s})$ for some $\gamma>0$ is derived,
by the same argument as in the proof of Proposition \ref{5BPro},
we obtain contradiction.
Therefore
it is enough to show that $v(s)$ decays exponentially in $L_\rho^2(\R)$.
In fact,
we decompose $v(s)$ as \eqref{5Geq} and define $X$, $Y$ and $Z$ as \eqref{5KLeq}.
Since $(u(s),v(s))\to(0,1)$,
repeating arguments in Section \ref{SubsubD},
we obtain \eqref{5Leq}.
Therefore
we obtain
\[
 \kappa(s)=\bar{\eta}X(s)-Y(s)-Z(s)<0,
\hfive
 \bar{\zeta}Y(s)<Z(s).
\]
This implies that $v(s)$ decays exponentially in $L_\rho^2(\R)$,
which completes the proof.
\end{proof}

\section*{Acknowledgement}
The author is partly supported by
Grant-in-Aid for Young Scientists (B) No. 26800065.



\begin{thebibliography}{99}
\bibitem{Ackermann}
 N. Ackermann, T. Bartsch,
 Superstable manifolds of semilinear parabolic problems,
 J. Dynam. Differential Equations {\bf 17} no. 1 (2005) 115-173.

\bibitem{Cohen}
 P. J. Cohen, M. Lees,
 Asymptotic decay of solutions of differential inequalities,
 Pacific J. Math. {\bf 11} (1961) 1235-1249.

\bibitem{Constantin}
 P. Constantin, P. D. Lax, and A. Majda
 A simple one-dimensional model for the three-dimensional vorticity equation,
 Comm. Pure Appl. Math. {\bf 38} no. 6 (1985) 715-724.

\bibitem{Filippas}
 S. Filippas, R. V. Cohn,
 Refined asymptotics for the blowup of $u_t-\Delta u=u^p$,
 Comm. Pure Appl. Math. {\bf 45} no. 7 (1992) 821-869.

\bibitem{Fujita}
 H. Fujita,
 On the blowing up of solutions of the Cauchy problem for $u_t=\Delta u+u^{1+\alpha}$,
 J. Fac. Sci. Univ. Tokyo Sect. I {\bf 13} (1966) 109-124 
 
\bibitem{Guo}
 J. S. Guo, H. Ninomiya, M. Shimojo, E. Yanagida,
 Convergence and blow-up of solutions for a complex-valued heat equation with a quadratic nonlinearity,
 Trans. Amer. Math. Soc. {\bf 365} no. 5 (2013) 2447-2467.


\bibitem{Herrero}
 M. A. Herrero, J. J. L. Vel\'azquez,
 Blow-up profiles in one-dimensional, semilinear parabolic problems,
 Comm. Partial Differential Equations
 {\bf17} no. 1-2 (1992) 205-219

 
\bibitem{Herrero**}
 J. J. L. Vel\'azquez,
 Higher-dimensional blow up for semilinear parabolic equations,
 Comm. Partial Differential Equations
 {\bf 17} no. 9-10 (1992) 1567-1596
 

\bibitem{Kotani}
 S. Kotani, H. Matano,
 Differential Equation and Eigenvalue problems,
 Iwanami-shoten (Japanese)

\bibitem{Matano}
 H. Matano,
 Nonincrease of the lap-number of a solution for a one-dimensional semilinear parabolic equation,
 J. Fac. Sci. Univ. Tokyo Sect. IA Math. {\bf 29} no. 2 (1982) 401-441

\bibitem{Naito}
 Y. Naito, T. Suzuki,
 Existence of type II blowup solutions for a semilinear heat equation with critical nonlinearity,
 J. Differential Equations {\bf 232} no. 1 (2007) 176-211

\bibitem{Ogawa}
 H. Ogawa,
 Lower Bounds for Solutions of Differential Inequalities in Hilbert Space,
 Proc. Amer. Math. Soc. {\bf 16} (1965) 1241-1243.

\bibitem{Palais}
 R. Palais
 Blowup for nonlinear equations using a comparison principle in Fourier space,
 Comm. Pure Appl. Math. {\bf 41} no. 2 (1988) 165-196.

\bibitem{Sakajo1}
 T. Sakajo,
 Blow-up solutions of the Constantin-Lax-Majda equation with a generalized viscosity term,
 J. Math. Sci. Univ. Tokyo {\bf 10} no. 1 (2003) 187-207.

\bibitem{Sakajo2}
 T. Sakajo,
 On global solutions for the Constantin-Lax-Majda equation with a generalized viscosity term,
 Nonlinearity {\bf 16} no. 4 (2003) 1319-228.

\bibitem{Schochet}
 S. Schochet,
 Explicit solutions of the viscous model vorticity equation,
 Comm. Pure Appl. Math. {\bf 39} no. 4 (1986) 531-537.

\bibitem{Yang}
 Y. Yang,
 Behavior of solutions of model equations for incompressible fluid flow,
 J. Differential Equations {\bf 125} no. 1 (1996) 133-153.
 
\bibitem{Nouaili}
 N. Nouaili, H. Zaag,
 Profile for a simultaneously blowing up solution for a complex valued semilinear heat equation,
 arXiv: 1306.4435.
\end{thebibliography}
\end{document}